\newtheorem{theorem}{Theorem}
\newtheorem{definition}{Definition}
\newtheorem{lemma}{Lemma}
\newtheorem{proposition}{Proposition}
\newtheorem{assumptions}{Assumptions}
\newtheorem{remark}{Remark}
\newenvironment{proof}[1][Proof]{\noindent\textbf{#1.} }{\ \rule{0.5em}{0.5em}}
\def\lessim{\ \lower4pt\hbox{$\buildrel{\displaystyle <}\over\sim$}\ }
\begin{document}

\title{Uniform asymptotics for kernel density estimators with variable bandwidths}
\author{{Evarist Gin\'{e}$^{a}$\thanks{$^a$Corresponding author. Email: gine@math.uconn.edu}~ and Hailin Sang$^b$} \\
\textit{$^a$\!University of Connecticut} and \textit{$^b$\!University of Cincinnati}}
\date{January 2009}
\maketitle

\begin{abstract}
\noindent It is shown that the Hall, Hu and Marron [Hall, P., Hu, T., and Marron J.S. (1995), Improved Variable
Window Kernel Estimates of Probability Densities, {\it Annals of Statistics}, 23, 1–-10] modification of
Abramson's [Abramson, I. (1982), On Bandwidth Variation in Kernel Estimates – A Square-root Law,
{\it Annals of Statistics}, 10, 1217–-1223] variable bandwidth kernel density estimator satisfies the optimal
asymptotic properties for estimating densities with four uniformly continuous derivatives, uniformly on
bounded sets where the preliminary estimator of the density is bounded away from zero.
\end{abstract}

\textit{MSC 2000 subject classification}: Primary: 62G07.

\textit{Key words and phrases: kernel density estimator, variable bandwidth, spatial adaptation, square root law, sup-norm loss, law of the logarithm, rates of convergence.}

\hfuzz=1truein

\section{Introduction and statement of the main result}
Let $f$ be a density on the real line and let $X_i$, $i\in\mathbb N$, be independent, identically distributed random variables with distribution of  density $f$.
Abramson (1982) discovered that if in the usual kernel density estimator one allows the bandwidth $h_n$ to vary with the data according to the `square root law', that is, if one takes
\begin{equation}\label{abr1}
f_n(t)=\frac{1}{n}\sum_{i=1}^n\frac{ f^{1/2}_t(X_i)}{h_n}K\left(\frac{t-X_i}{h_n} f^{1/2}_t(X_i)\right)
\end{equation}
instead of the classical estimators with the same sequence $h_n\to 0$, where $ f_t(x)=f(x)\vee (f(t)/10)$, then a {\it bias reduction} phenomenon occurs. This has been used by Hall, Hu and Marron (1995) (following Hall and Marron (1988), corr. (1992)), McKay (1993) and Novak (1999), to propose density estimators which are {\it non-negative} at all points and which estimate $f(t)$ at any given $t$ at the $L_2$-norm loss minimax rate of $n^{-4/9}$ if the density $f$ is four times differentiable with continuous and bounded derivatives.

Of course, the expression (\ref{abr1}) is not an estimator of $f$ as it depends on the unknown $f$ through $f_t$, but it becomes one if $f$ is replaced by a preliminary estimator (based on the same data, or on an independent set of data). As in the mentioned papers, expressions such as (\ref{abr1}) will be referred to here as `ideal' estimators.

It was once believed that $f_t$ in (\ref{abr1}) could be replaced by $f$, but Terrell and Scott (1992) showed that in  this case the bias reduction at a single $t$ depends heavily on the tail of $f$ and becomes negligible in the normal case (see also Hall, Hu and Marron (1995) and McKay (1993)). Taking $f_t$ instead of $f$ as Abramson did constitutes a way to deal with the tail effects on the localities $t$. Hall, Hu and Marron (1995), McKay (1993) and Novak (1999) also devised other ways of dealing with the problem. In  particular, Hall, Hu and Marron proposed the ideal estimator
\begin{equation}\label{ideal0}
\bar f_n(t)=\frac{1}{n h_n}\sum_{i=1}^{n}K\left(\frac{t-X_i}{h_n} f^{1/2}(X_i)\right) f^{1/2}(X_i)I(|t-X_{i}|<h_nB),
\end{equation}
for some $B>0$. Novak replaces $h_n$ in the indicator by $h_n/f^{1/2}(t)$ and considers powers other than 1/2 as well, and McKay replaces $ f^{1/2}_t(x)$ in (\ref{abr1}) by a smooth function $\alpha (x) =cv^{1/2}(f(x)/c^2)$ with $v(t)=t$ for all $t\ge t_0\ge 1$ with the first four derivatives of $v$ vanishing at zero.  We will focus our attention only on the simplest of these ideal estimators, which is (\ref{ideal0}), although our results should hold for the other versions as well. The ideal estimator will only be a means to study the `true' estimator, obtained from the ideal by replacement of $f$ by a preliminary estimator.

Specifically, in this article we study the uniform approximation of a density $f$ by estimators of the form
\begin{equation}\label{realest0}
\hat f(t;h_{1,n},  h_{2,n})=\frac{1}{n h_{2,n}}\sum_{i=1}^{n}K\left(\frac{t-X_i}{h_{2,n}}\hat f^{1/2}(X_i;h_{1,n})\right)\hat f^{1/2}(X_i;h_{1,n})I(|t-X_{i}|<h_{2,n}B),
\end{equation}
where
$\hat f(x;h_{1,n})$ is the classical kernel density estimator
$$
\hat f(x;h_{1,n})=\frac{1}{n h_{1,n}}\sum_{i=1}^{n}K\left(\frac{x-X_i}{h_{1,n}}\right)
$$
and $h_{i,n}$ are two sequences of bandwidths that tend to zero as $n\to\infty$.
Ideally, we would like to prove results for $\|\hat f(t;h_{1,n},  h_{2,n})-f(t)\|_\infty$, however  controlling the bias part of this error, $|E\hat f(t;h_{1,n},  h_{2,n})-f(t)|$, seems to require that $f(t)$ be bounded away from zero, so, we will consider instead the supremum of the estimation error on the `ideal' regions
\begin{equation}\label{region0}
D_r=D_r(f):=\{t: f(t)>r, |t|<1/r\},\ \ r>0,
\end{equation}
and will eventually replace $D_r$ by a region that depends on the data only and that can be made arbitrarily close to the positivity set of $f$. (We will not display the argument $f$ in $D_r(f)$ unless confusion is possible.) It is known (Hall, Hu and Marron (1995), Novak (1999)) that the bias reduction does hold for $f$ and $K$ four times differentiable and that then one has a bias of the order of $h_{2,n}^4$. This leads almost immediately in the case of the ideal estimator, and with some relatively hard work in the case of the real estimator, to an a.s. rate of convergence of $\hat f(t;h_{1,n},  h_{2,n})-f(t)$  (fixed $t$) of the order of $n^{-4/9}$ if we take $h_{2,n}\simeq n^{-1/9}$ and  $h_{1,n}\simeq n^{-2/9}$ or of a smaller order, and this is best possible ($n^{-4/9}$ is the minimax rate in the $L_2(P)$ norm for estimating $f(t)$ four times differentiable with continuity). The minimax rate for the sup norm in this case is $(n/\log n)^{-4/9}$  and we show in this article that this rate is achieved by the estimator (\ref{realest0}) uniformly in $D_r$ and in a similar data-dependent region. (See e.g. Efromovich (1999) for minimax rates.) Concretely, we prove the following theorem, in fact, as explained below, a {\it uniform }version of it.

\begin{theorem}\label{main0}
Assume the density $f$ and its first four derivatives are uniformly continuous and bounded,  that the same is true for the kernel $K$, which, moreover is non-negative, has support contained in $[-T,T]$, $T<\infty$, integrates to 1 and is symmetric about zero. Set $h_{2,n}= ((\log n)/n)^{1/9}$ and $h_{1,n}=n^{-2/9}$ (or $h_{1,n}=n^{-(2+\eta)/9}$ for some $0\le\eta<1$), $n\in\mathbb N$. Then, for all $r>0$ and constant $B\ge T/r^{1/2}$ in the definition of $\hat f(t;h_{1,n},  h_{2,n})$ in (\ref{realest0}), we have
\begin{equation}\label{main1}
\sup_{t\in D_r}\left|\hat f(t;h_{1,n},  h_{2,n})-f(t)\right|=O_{\rm a.s.}\left(\left(\frac{\log n}{n}\right)^{4/9}\right).
\end{equation}
If $\hat D_r^n$ is defined as
\begin{equation}\label{region1}
\hat D_r^n=\left\{t: \hat f(t;h_{1,n})>2r, |t|<1/r\right\},
\end{equation}
then we also have
\begin{equation}\label{main2}
\sup_{t\in \hat D_r^n}\left|\hat f(t;h_{1,n},  h_{2,n})-f(t)\right|=O_{\rm a.s.}\left(\left(\frac{\log n}{n}\right)^{4/9}\right).
\end{equation}
\end {theorem}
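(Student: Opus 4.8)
The plan is to compare $\hat f(t;h_{1,n},h_{2,n})$ with the ideal estimator $\bar f_n$ of (\ref{ideal0}) (with the given $B$ and $h_n=h_{2,n}$) and to treat the bias, the stochastic fluctuation, and the pilot-substitution error separately. Concretely, I would write
\[
\hat f(t;h_{1,n},h_{2,n})-f(t)=\bigl[\hat f(t;h_{1,n},h_{2,n})-\bar f_n(t)\bigr]+\bigl[\bar f_n(t)-E\bar f_n(t)\bigr]+\bigl[E\bar f_n(t)-f(t)\bigr]
\]
and bound the supremum over $D_r$ of each bracket by $O((\log n/n)^{4/9})$, almost surely for the first two. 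For the deterministic bracket I would run the Abramson square-root-law expansion (Hall, Hu and Marron (1995), Novak (1999)): after the change of variables $x=t+h_{2,n}y$ and Taylor expansion of $f^{1/2}(t+h_{2,n}y)$ and of $K$ about $yf^{1/2}(t)$, the symmetry of $K$ kills the odd-order terms and the square-root adjustment cancels the $O(h_{2,n}^2)$ term, leaving a remainder controlled by the uniformly bounded fourth derivatives of $f$ and by negative powers of $f^{1/2}(t)\ge r^{1/2}$; hence $\sup_{t\in D_r}|E\bar f_n(t)-f(t)|=O(h_{2,n}^4)=O((\log n/n)^{4/9})$. Two facts make this valid on $D_r$: uniform continuity of $f$ forces $f(x)>r/2$ whenever $|t-x|<h_{2,n}B$ and $n$ is large, so the expansion takes place where $f$ is bounded away from zero; and the hypothesis $B\ge T/r^{1/2}$ ensures that the indicator $I(|t-x|<h_{2,n}B)$ does not cut off the part of the Abramson kernel carrying the main term.

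For the centered ideal bracket, the functions $g_t(x)=K\bigl((t-x)f^{1/2}(x)/h_{2,n}\bigr)f^{1/2}(x)I(|t-x|<h_{2,n}B)$, $t\in D_r$, form a uniformly bounded, measurable VC-type class (since $K$ and $K'$ are bounded and of bounded variation and $f^{1/2}$ is a fixed function) with $\sup_{t\in D_r}E g_t(X)^2=O(h_{2,n})$. Talagrand's inequality bounds $\sup_{t\in D_r}\bigl|\sum_{i=1}^n(g_t(X_i)-Eg_t)\bigr|$ by $O(\sqrt{nh_{2,n}\log n})$ off a summable exceptional event, and dividing by $nh_{2,n}$ and invoking Borel--Cantelli gives $\sup_{t\in D_r}|\bar f_n(t)-E\bar f_n(t)|=O_{\rm a.s.}\bigl(\sqrt{\log n/(nh_{2,n})}\bigr)=O_{\rm a.s.}((\log n/n)^{4/9})$.

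The substantive part is the pilot-substitution bracket. I would Taylor-expand $\hat f(t;h_{1,n},h_{2,n})-\bar f_n(t)$ in the substitution $f^{1/2}(X_i)\mapsto\hat f^{1/2}(X_i;h_{1,n})$. Using the uniform consistency $\delta_n:=\sup_{|x|\le 1/r+1}|\hat f(x;h_{1,n})-f(x)|=O_{\rm a.s.}\bigl((\log n/(nh_{1,n}))^{1/2}+h_{1,n}^2\bigr)$, the boundedness of $K'$ and $K''$, and the lower bound $f(x)>r/2$ on the effective range, the second-order remainder is $O_{\rm a.s.}(\delta_n^2)$, which is $o((\log n/n)^{4/9})$ for the prescribed $h_{1,n}$. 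Writing $\hat f^{1/2}(X_i;h_{1,n})-f^{1/2}(X_i)=\bigl(\hat f(X_i;h_{1,n})-f(X_i)\bigr)/(2f^{1/2}(X_i))+O(\delta_n^2)$, the linear term reduces to $(nh_{2,n})^{-1}\sum_i w(t,X_i)\bigl(\hat f(X_i;h_{1,n})-f(X_i)\bigr)$ with $w$ bounded on the relevant set. Splitting $\hat f(X_i;h_{1,n})-f(X_i)$ into its $O(h_{1,n}^2)$ bias part (contributing $O_{\rm a.s.}(h_{1,n}^2)=O((\log n/n)^{4/9})$) and its centered part, and then expanding $\hat f(\cdot;h_{1,n})$, turns the latter into a second-order $U$-process in the sample, degenerate in the inner index and indexed by $t\in D_r$. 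Its Hoeffding projection onto one coordinate is again a smooth empirical sum of order $\sqrt{\log n/(nh_{2,n})}=(\log n/n)^{4/9}$, while its fully canonical component, estimated by a moment inequality for degenerate $U$-processes over VC-type classes with kernel variance $O(h_{1,n}h_{2,n})$ and sup-norm $O(1)$, is of strictly smaller order than $(\log n/n)^{4/9}$. I expect this degenerate $U$-process bound --- together with the measurability and covering-number checks needed to apply the empirical- and $U$-process moment inequalities --- to be the main obstacle: the crude estimate $\sup_{t\in D_r}|\hat f(t;h_{1,n},h_{2,n})-\bar f_n(t)|=O_{\rm a.s.}(\delta_n)$ is useless, since $\delta_n$ is of larger order than $(\log n/n)^{4/9}$, so extracting the Hoeffding cancellation is unavoidable.

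Finally, (\ref{main2}) follows from (\ref{main1}): the same uniform consistency gives $\sup_{|t|<1/r}|\hat f(t;h_{1,n})-f(t)|\to 0$ a.s., so for $n$ large, almost surely, $\hat D_r^n=\{t:\hat f(t;h_{1,n})>2r,\,|t|<1/r\}\subseteq\{t:f(t)>r,\,|t|<1/r\}=D_r$, whence $\sup_{t\in\hat D_r^n}|\hat f(t;h_{1,n},h_{2,n})-f(t)|\le\sup_{t\in D_r}|\hat f(t;h_{1,n},h_{2,n})-f(t)|$, and the right-hand side is $O_{\rm a.s.}((\log n/n)^{4/9})$ by (\ref{main1}).
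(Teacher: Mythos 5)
Your proposal is correct and follows essentially the same route as the paper: the decomposition into bias of the ideal estimator (handled by the Abramson square-root expansion on $D_r$ with $f>r/2$ and $B\ge T/r^{1/2}$), stochastic part of the ideal estimator (Talagrand over a VC-type class with variance $O(h_{2,n})$), and the pilot-substitution error linearized and then Hoeffding-decomposed into a dominant one-coordinate projection plus a negligible canonical second-order term, with $\hat D_r^n\subseteq D_r$ eventually a.s.\ giving (\ref{main2}). The only small deviation is that you invoke a moment inequality for the canonical degenerate $U$-process where the paper uses Major's exponential bound (\ref{major}) to make the tail summable directly, and you do not explicitly separate out the diagonal term of the $V$-statistic; both are routine fixes and do not change the structure of the argument.
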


(Actually, $h_{2,n}$ needs only be asymptotically of the order  $((\log n)/n)^{1/9}$ in the sense that
$0\le \liminf_n \frac{h_{2,n}}{((\log n)/n)^{1/9}}\le \limsup_n\frac{h_{2,n}}{((\log n)/n)^{1/9}}<\infty$, and the same comment applies to $h_{1,n}$, but for simplicity we will work with exact values.)

We note that, by the zero-one law,  statement (\ref{main1})  is equivalent to the existence of a finite constant $C$  such that
\begin{equation}\label{main1a}
\limsup_n\left(\frac{n}{\log n}\right)^{4/9}\sup_{t\in D_r}\left|\hat f(t;h_{1,n},  h_{2,n})-f(t)\right|=C\ \ {\rm a.s.}
\end{equation}
and likewise for (\ref{main2}). And (\ref{main1a}) holds for some $C<\infty$  if and only if there is $C'<\infty$ such that
$$\lim_{k\to\infty}\Pr\left\{\sup_{n\ge k}\left(\frac{n}{\log n}\right)^{4/9}\sup_{t\in D_r}\left|\hat f(t;h_{1,n},  h_{2,n})-f(t)\right|>C'\right\}=0.$$
So, the following definition is justified (it is similar to the definition of uniform Glivenko-Cantelli classes of functions in Dudley, Gin\'e and Zinn (1991)):

\begin{definition}
For each $n\in\mathbb N$, let $Z_n(x_1,\dots,x_n;f)$ be functions of $n$ real variables $x_1,\dots, x_n$ and of the density $f$, $f\in\cal D$, where $\cal D$ is a collection of densities.
We say that the collection of random variables $Z_n(X_1,\dots, X_n,f)$, $f\in{\cal D}$, $n\in\mathbb N$, is a.s. asymptotically of the order of $a_n$ uniformly in $f\in {\cal D}$,
$$Z_n(X_1,\dots,X_n,f)=O_{\rm a.s.}(a_n)\ \ {\rm uniformly\ in}\ f\in{\cal D},$$
if there exists $C<\infty$ such that
\begin{equation}\label{defunif}
\lim_{k\to\infty}\sup_{f\in {\cal D}}{\Pr}_f\left\{\sup_{n\ge k}\frac{1}{a_n}|Z_n(X_1,\dots,X_n,f)|>C\right\}=0,
\end{equation}
and $o_{\rm a.s.}(a_n)$ uniformly in $f\in\cal D$ if the limit (\ref{defunif}) holds for every $C>0$.
\end{definition}

For $0<C<\infty$ and non-negative function $z$ such that $z(\delta)\searrow 0$ as $\delta\searrow 0$, define the class of densities
$$
{\cal D}_{C,z}:=\Bigg\{f: f\ {\rm is\ a\ density,}\ \|f^{(k)}\|_\infty\le C, \ 0\le k\le 4,~~~~~~~~~~~~~~~~~~~~~~$$
\begin{equation}\label{De}
~~~~~~~~~~~~~~~~~~~~~~~{\rm and}\ \sup_{{t\in\mathbb R}\atop {|u|\le \delta}}\left|f^{(4)}(t+u)-f^{(4)}(t)\right|\le z(\delta), \ 0<\delta\le 1\Bigg\}
\end{equation}

Here is the stronger version of Theorem \ref{main0} that we prove in this article.
\begin{theorem}\label{mainu}
Under the hypotheses of Theorem \ref{main0} we have
\begin{equation}\label{main1'}
\sup_{t\in D_r(f)}\left|\hat f(t;h_{1,n},  h_{2,n})-f(t)\right|=O_{\rm a.s.}\left(\left(\frac{\log n}{n}\right)^{4/9}\right)\ \ {\rm uniformly\ in}\ f\in{\cal D}_{C,z}
\end{equation}
and
\begin{equation}\label{main2'}
\sup_{t\in \hat D_r^n}\left|\hat f(t;h_{1,n},  h_{2,n})-f(t)\right|=O_{\rm a.s.}\left(\left(\frac{\log n}{n}\right)^{4/9}\right)\ \ {\rm uniformly\ in}\ f\in{\cal D}_{C,z}
\end{equation}
for all $0<C<\infty$ and function $z\ge 0$ such that $z(h)\searrow 0$ as $h\searrow 0$.
\end {theorem}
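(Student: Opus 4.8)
\noindent\emph{Outline of the argument.} The plan is to split, for $t\in D_r(f)$, the error $\hat f(t;h_{1,n},h_{2,n})-f(t)$ into a \emph{replacement error} $\hat f(t;h_{1,n},h_{2,n})-\bar f_n(t)$, a \emph{stochastic term} $\bar f_n(t)-E\bar f_n(t)$, and a \emph{bias} $E\bar f_n(t)-f(t)$, where $\bar f_n$ is the ideal estimator (\ref{ideal0}) built with the true $f$ and the same constant $B$; and to bound the supremum over $t\in D_r(f)$ of each of the three by $O_{\rm a.s.}((\log n/n)^{4/9})$, uniformly in $f\in{\cal D}_{C,z}$, which is (\ref{main1'}). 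For (\ref{main2'}) I would then use that the pilot estimator is uniformly sup-norm consistent, $\sup_x|\hat f(x;h_{1,n})-f(x)|\to 0$ a.s.\ uniformly in $f\in{\cal D}_{C,z}$ (note $nh_{1,n}/\log n\to\infty$ for both choices of $h_{1,n}$), which forces $\hat D_r^n\subseteq D_r(f)$ for all large $n$ on an event whose probability tends to $1$ uniformly in $f$, so that (\ref{main2'}) follows from (\ref{main1'}). The reason each ``fixed $f$'' bound below becomes uniform over ${\cal D}_{C,z}$ is that every constant that appears --- in Taylor remainders, in variance bounds, and in the covering numbers of the relevant function classes --- depends on $f$ only through $C$ and the modulus $z$, so (\ref{defunif}) holds automatically.

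\noindent\emph{The bias and the stochastic term.} For the bias I would argue as in Hall, Hu and Marron (1995) and Novak (1999): a fourth-order Taylor expansion of the integrand of $E\bar f_n(t)$, the symmetry of $K$ and $\int K=1$, together with Abramson's square-root-law cancellation of the $h_{2,n}^2$ term, give $\sup_{t\in D_r}|E\bar f_n(t)-f(t)|=O(h_{2,n}^4)=O((\log n/n)^{4/9})$. This uses that, for $t\in D_r$ and $n$ past a threshold depending only on $C,B,r$, one has $f>r/2$ on the $h_{2,n}B$-neighbourhood of $t$, so the hypothesis $B\ge T/r^{1/2}$ guarantees the indicator $I(|t-X_i|<h_{2,n}B)$ does not truncate the kernel's support there; the uniform modulus $z$ of $f^{(4)}$ is what makes the fourth-order remainder uniform over ${\cal D}_{C,z}$. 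For the stochastic term, $\bar f_n(t)-E\bar f_n(t)$ is the centred empirical process indexed by ${\cal F}_n=\{x\mapsto h_{2,n}^{-1}K(h_{2,n}^{-1}(t-x)f^{1/2}(x))f^{1/2}(x)I(|t-x|<h_{2,n}B):t\in D_r\}$; since $K$ is bounded with bounded derivative and compact support and $f^{1/2}$ is smooth with bounds depending only on $C$, ${\cal F}_n$ is a VC-type class with polynomial covering number (uniform over ${\cal D}_{C,z}$), envelope of order $h_{2,n}^{-1}$ supported on a window of width $\asymp h_{2,n}$, and weak variance $\lesssim h_{2,n}^{-1}$, so Talagrand's inequality with the usual moment bound for such classes (Einmahl and Mason; Gin\'e and Guillou) gives $\sup_{t\in D_r}|\bar f_n(t)-E\bar f_n(t)|=O_{\rm a.s.}(\sqrt{\log n/(nh_{2,n})})=O_{\rm a.s.}((\log n/n)^{4/9})$, since $nh_{2,n}=n^{8/9}(\log n)^{1/9}$.

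\noindent\emph{The replacement error.} Write $\Delta(x)=\hat f(x;h_{1,n})-f(x)$ and $\phi_{t,x}(s)=h_{2,n}^{-1}K(h_{2,n}^{-1}(t-x)s^{1/2})s^{1/2}I(|t-x|<h_{2,n}B)$, so that $\hat f(t;h_{1,n},h_{2,n})-\bar f_n(t)=n^{-1}\sum_i(\phi_{t,X_i}(\hat f(X_i;h_{1,n}))-\phi_{t,X_i}(f(X_i)))$. On $D_r$ the indicator forces $f(X_i)>r/2$, and on the good event above also $\hat f(X_i;h_{1,n})>r/4$, so I can Taylor-expand $\phi_{t,X_i}$ to second order in its last argument, noting that $|\phi'_{t,x}(s)|$ and $|\phi''_{t,x}(s)|$ are $\lesssim h_{2,n}^{-1}I(|t-x|<h_{2,n}B)$ for $s$ bounded away from $0$ (here $K\in C^2$ is used). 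Since $\sup_x|\Delta(x)|=O_{\rm a.s.}(h_{1,n}^2+\sqrt{\log n/(nh_{1,n})})$ uniformly in ${\cal D}_{C,z}$ and $n^{-1}\sum_iI(|t-X_i|<h_{2,n}B)=O_{\rm a.s.}(h_{2,n})$ uniformly in $t$ and $f$, the second-order remainder is $O_{\rm a.s.}(h_{2,n}^{-1}\|\Delta\|_\infty^2 h_{2,n})=O_{\rm a.s.}(\|\Delta\|_\infty^2)=o_{\rm a.s.}((\log n/n)^{4/9})$. In the linear term $n^{-1}\sum_i\phi'_{t,X_i}(f(X_i))\Delta(X_i)$ I split $\Delta=b+v$, with $b(x)=E\hat f(x;h_{1,n})-f(x)$, $|b|\le Ch_{1,n}^2$, and $v(x)=n^{-1}\sum_j g_n(x,X_j)$, $g_n(x,y)=h_{1,n}^{-1}K(h_{1,n}^{-1}(x-y))-h_{1,n}^{-1}\!\int K(h_{1,n}^{-1}(x-z))f(z)\,dz$ (so $E_y g_n(x,y)=0$). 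The $b$-contribution is $O_{\rm a.s.}(h_{1,n}^2)=o((\log n/n)^{4/9})$; the $v$-contribution is the $V$-statistic $n^{-2}\sum_{i,j}\phi'_{t,X_i}(f(X_i))g_n(X_i,X_j)$, whose diagonal is $O_{\rm a.s.}((nh_{1,n})^{-1})=o((\log n/n)^{4/9})$ because $|g_n(x,x)|\lesssim h_{1,n}^{-1}$, and whose off-diagonal part is a $U$-process with kernel $\psi_t(x,y)=\phi'_{t,x}(f(x))g_n(x,y)$, degenerate in its $y$-argument. Its Hoeffding decomposition therefore has no constant and no ``$x$-linear'' term, and reduces to a genuinely-linear piece $n^{-1}\sum_j b_t(X_j)$ with $b_t(y)=E_X[\phi'_{t,X}(f(X))g_n(X,y)]$, plus a completely degenerate piece. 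One checks that $b_t$ is centred, bounded by $\lesssim h_{2,n}^{-1}$ and supported (modulo a constant in $y$ that the centring removes) on $|t-y|\lesssim h_{2,n}$, hence has variance $\lesssim h_{2,n}^{-1}$; Talagrand's inequality on the VC-type class $\{b_t:t\in D_r\}$ then gives $\sup_t|n^{-1}\sum_j(b_t(X_j)-Eb_t(X))|=O_{\rm a.s.}(\sqrt{\log n/(nh_{2,n})})=O_{\rm a.s.}((\log n/n)^{4/9})$. Finally, the completely degenerate piece, whose kernel has sup-norm $\lesssim (h_{1,n}h_{2,n})^{-1}$ and weak variance $\lesssim (h_{1,n}h_{2,n})^{-1}$, is controlled by the moment/exponential inequalities for canonical $U$-processes over VC-type classes (Arcones and Gin\'e; de la Pe\~na and Gin\'e); since degenerate $U$-statistics converge at the faster rate $n^{-1}$, its contribution, divided by $h_{2,n}$, is $O_{\rm a.s.}((\log n)^{c_0}n^{-5/6})=o_{\rm a.s.}((\log n/n)^{4/9})$ for a fixed $c_0$, for the prescribed $h_{1,n}=n^{-(2+\eta)/9}$, $0\le\eta<1$, and $h_{2,n}\asymp(\log n/n)^{1/9}$.

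\noindent\emph{The main obstacle.} Everything that has size exactly $(\log n/n)^{4/9}$ --- the bias, the stochastic term, and the linear-in-$b_t$ piece of the replacement error --- is a routine bias or empirical-process estimate; the real work is to show that every genuinely new piece of the replacement error is of \emph{lower} order. I expect the crux to be the completely degenerate $U$-process: one must (i) check that the $n$-dependent, doubly-localised kernel class $\{\psi_t-b_t:t\in D_r\}$ has VC-type covering numbers whose characteristics depend on $f$ only through $C$ and $z$; (ii) apply a moment/exponential inequality for canonical $U$-processes with the correct dependence on the kernel's sup-norm ($\asymp(h_{1,n}h_{2,n})^{-1}$) and weak variance ($\asymp(h_{1,n}h_{2,n})^{-1}$); and (iii) verify, exploiting the gap $h_{1,n}\ll h_{2,n}$ and the $n^{-1}$ rate of degenerate $U$-statistics, that the resulting bound divided by $h_{2,n}$ is $o((\log n/n)^{4/9})$. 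Threading the uniform control of all constants and covering numbers through every one of these steps, so as to land on (\ref{defunif}) rather than merely a fixed-$f$ statement, is the other place where care is needed, though it involves no new idea.
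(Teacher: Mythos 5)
Your proposal is essentially correct and follows the same route as the paper: the three-way split (bias of the ideal estimator, its stochastic part via Talagrand on a VC-type class, and the replacement error), a second-order Taylor expansion to extract the dominant term of the replacement error, a Hoeffding decomposition of the resulting $U$-process into a diagonal, a linear piece controlled by Talagrand, and a canonical piece controlled by an exponential bound for degenerate $U$-processes (the paper uses Major (2006) where you cite Arcones--Gin\'e/de la Pe\~na--Gin\'e, but the polynomial slack $n^{-5/6}\ll(\log n/n)^{4/9}$ makes either adequate), followed by the containment $\hat D_r^n\subset D_r(f)$ from uniform consistency of the pilot. The only cosmetic difference is that you Taylor-expand $\phi_{t,x}(s)=h_{2,n}^{-1}K(h_{2,n}^{-1}(t-x)s^{1/2})s^{1/2}I(\cdot)$ in $s$ around $f(x)$, whereas the paper expands $K$ in the relative perturbation $\delta=(\hat f^{1/2}-f^{1/2})/f^{1/2}$; these are algebraically equivalent and produce the same $L=K+zK'$ in the linear term.
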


It is natural that, as shown by Hall, Hu and Marron (1995), the estimator (\ref{realest0}) be locally (that is, at each point $t$) asymptotically better than the classical kernel estimator that it modifies because, after all, it is  obtained from the classical one by {\it local or spatial adaptation} of the bandwidth. This theorem shows that, up to a logarithmic factor, the improvement is not only local but holds uniformly over all $t$ for which $f(t)$ is slightly above zero, and uniformly as well over large classes of densities with four continuous derivatives. This may seem surprising and is certainly desirable. See the comments by Donoho, Johnstone, Kerkyacharian and Picard (1995) about the scarcity of theoretical results on `spatially adaptive' estimators.

We do not know of any other non-negative estimators of a density that achieve such good rates in sup-norm loss (although Abramson's or Novak's may). Thresholding  wavelet density estimators (Donoho, Johnstone, Kerkyacharian and Picard (1996)) constitutes also a kind of adaptation to the local behavior of $f$ since wavelets pick up local behavior; these estimators may not be non-negative on the whole domain, but are rate adaptive to the smoothness of $f$ in sup-norm loss, in particular satisfying Theorem  \ref{mainu} -but also attaining the rate $((\log n)/n)^{t/(2t+1)}$ uniformly on densities in the unit ball of $C^t({\mathbb R})$ (Gin\'e and Nickl (2008)). See also Gin\'e and Nickl (2009) for estimators with this property based on convolution kernels of higher order and Lepski's method.

We  first prove Theorem \ref{mainu} for the ideal estimator and then  show that the supremum over $D_r$ of the difference between the true and the ideal estimators is of the order of $\left((\log n)/n\right)^{4/9}$. For this we  use empirical process and U-process techniques: basically, the classes of functions involved in the supremum in (\ref{main1}) and in other suprema appearing in the proofs are of Vapnik-\v Cervonenkis type (see e.g. de la Pe\~na and Gin\'e (1999)) and therefore we can use the appropriate version of Talagrand's exponential inequality for empirical processes (as in Einmahl and Mason (2000) and Gin\'e and Guillou (2002)), and an inequality due to Major (2006) for $U$-processes. We relegate to an appendix proving that the relevant classes of functions are of VC type, so that we get this technicality out of the way in the main proofs.

Since we use empirical processes, in order to avoid measurability problems and  without loss of generality, we assume throughout that the variables $X_i$ are the coordinate functions on $\Omega={\mathbb R}^{\mathbb N}$, equipped with the product $\sigma$-algebra and the probability measure  $\Pr=P^{\mathbb N}$, $dP(x)=f(x)dx$, that we will denote as ${\Pr}_f$ if (and only if) we need to distinguish among several densities.

\section{The ideal estimator}

In this section we obtain the asymptotic size of the uniform deviation of the ideal estimator (\ref{ideal0}) from the density $f$, that is, we will consider the a.s. asymptotic size of
$$\sup_{t\in D_r} |\bar f(t;h_n)-f(t)|:=\|\bar f(t;h_n)-f(t)\|_{D_r}$$
As usual this quantity is divided into the bias part, $\|E\bar  f(t;h_n)-f(t)\|_{D_r}$, and the stochastic part or variance part $\|\bar f(t;h_n)-E\bar f(t;h_n)\|_{D_r}$. Each is studied in a different subsection. There is no problem with extending the supremum for the variance part over the whole of $\mathbb R$; the problem is, as mentioned above, with the bias.

We  will use the shorthand notations
$$\bar f_n(t;h)=\bar f_n(t)=\bar f(t;h_n)$$
so that we display only either $h_n$ or $n$ but not both; the first expression is used in this section and the second in the next.

\subsection{Stochastic part of the `ideal' estimator}

In this subsection we assume:
\begin{assumptions} \label{ass1}
The sequence $h_n$ will satisfy the following classical conditions:
\begin{equation}\label{band}
h_{n}\searrow0,  \;\; \frac{nh_{n}}{|\log h_{n}|}\rightarrow\infty, \;\;  \frac{|\log h_{n}|}{\log\log{n}}
\rightarrow\infty,\;\; and\:\: nh_n\nearrow,
\end{equation}
as $n\to\infty$. The kernel $K$ will be a non-negative left or right continuous function, bounded, with support contained in $[-T,T]$ for some $T<\infty$,  and of bounded variation. $f$ is a bounded density.
\end{assumptions}

The proof of the following proposition is patterned after the proof of a similar theorem in Gin\'e and Guillou (2002), and it consists of blocking and application of  Talagrand's inequality (\ref{tal}). It extends to the variable bandwidth estimator a well known uniform rate for the usual kernel estimator (Silverman (1978), formula (9)).

\begin{proposition}\label{varid}
Under the hypotheses in Assumptions \ref{ass1},
\[
||\bar{f}_n-E\bar{f}_n||_{\infty}=O_{\rm a.s.}\left(\sqrt{\frac{\log h_{n}^{-1}}{n h_{n}}}\right)
\]
uniformly over all densities $f$ such that $\|f\|_\infty\le C$, for any $0<C<\infty$.
\end{proposition}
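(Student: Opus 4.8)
The plan is to follow the blocking-plus-Talagrand strategy of Giné and Guillou (2002), adapted to the variable-bandwidth estimator, taking care throughout that all constants depend only on the bound $C$ on $\|f\|_\infty$, on $K$, and on $T$, so that the rate holds uniformly over the class of densities in question. Write
$$
\bar f_n(t)-E\bar f_n(t)=\frac1n\sum_{i=1}^n\bigl(g_{t,h_n}(X_i)-Eg_{t,h_n}(X_i)\bigr),\qquad
g_{t,h}(x)=\frac1h K\!\left(\frac{t-x}{h}f^{1/2}(x)\right)f^{1/2}(x)I(|t-x|<hB).
$$
First I would fix a dyadic-type blocking of the integers: partition $\mathbb N$ into successive blocks on which $h_n$ is essentially constant (using $h_n\searrow0$, $nh_n\nearrow$, and the two logarithmic growth conditions in (\ref{band})), and on each block replace $h_n$ by the value of $h$ at the right endpoint, so that within a block one controls a single empirical process indexed by the class $\mathcal G_h=\{g_{t,h}:t\in\mathbb R\}$ for that fixed $h$. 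The conditions on $h_n$ are exactly what is needed to make the number of blocks subpolynomial and to make the per-block fluctuation of $h_n$ negligible, and to sum the resulting exponential bounds via Borel--Cantelli.

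The analytic core is to apply Talagrand's inequality (referred to as (\ref{tal}) in the paper) to $\sup_{t}|\sum_{i\le m}(g_{t,h}(X_i)-Eg_{t,h}(X_i))|$ for $m$ in a fixed block. For this I need three ingredients, each uniform in $f\in\{\|f\|_\infty\le C\}$: (i) a uniform sup-norm bound $\|g_{t,h}\|_\infty\le U/h$ with $U$ depending only on $\|K\|_\infty$ and $C$ (immediate, since $f^{1/2}(x)\le C^{1/2}$ on the support of $K$ in the argument); (ii) a variance bound $\sup_t\mathrm{Var}(g_{t,h}(X))\le \sigma^2/h$ with $\sigma^2$ depending only on $\|K\|_\infty$, $C$, $B$, $T$ — obtained by the change of variables $x\mapsto (t-x)f^{1/2}(x)/h$ (note on the relevant range $f(x)$ is comparable to $f(t)$ only where $f(t)$ is bounded below, but for the \emph{variance} bound one only needs $f\le C$ and the indicator $|t-x|<hB$, which already gives an $O(h)$-length domain and hence an $O(1/h)$ bound on the second moment); and (iii) the fact that $\mathcal G_h$ is a VC-type class with characteristics (the constants $A$ and $v$ in the covering-number bound $N(\mathcal G_h,\varepsilon)\le (A\|G\|_{L^2}/\varepsilon)^v$) that do not depend on $h$ or on $f$ — this is exactly what the appendix referred to in the introduction supplies, so I would simply invoke it. With these, Talagrand's inequality gives, for a suitable absolute-plus-($C,K,B,T$) constant $L$,
$$
\Pr\left\{\max_{m\in\text{block}}\sup_t\Bigl|\sum_{i\le m}(g_{t,h}-Eg_{t,h})(X_i)\Bigr|>L\sqrt{m_{\max}\,h^{-1}\log(1/h)}\right\}\le \exp\!\bigl(-L'\log(1/h)\bigr),
$$
and the exponent, by the condition $|\log h_n|/\log\log n\to\infty$, beats the number of blocks summably.

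I would then sum over blocks via Borel--Cantelli (in the strong, i.e. uniform-in-$f$, sense of the paper's Definition, which only requires the tail-probability bound to be uniform in $f$ — and it is, since $L,L'$ are), dividing through by $n$ to convert $\sqrt{m_{\max}h^{-1}\log(1/h)}$ into the claimed $\sqrt{(\log h_n^{-1})/(nh_n)}$; here $nh_n\nearrow$ is used to pass from block-maxima back to all $n$. The main obstacle is bookkeeping rather than any single hard estimate: arranging the blocking so that (a) within each block $h_n$, $nh_n$, and $\log h_n^{-1}$ vary by at most a bounded factor, (b) the number of blocks up to stage $n$ grows slower than $\exp(c\log h_n^{-1})$ for every $c>0$ (which is where $|\log h_n|/\log\log n\to\infty$ enters), and (c) the maximal inequality over a block is controlled by its right endpoint — and then checking at each step that every constant introduced is a function of $C,\|K\|_\infty,\mathrm{Var}(K),T,B$ only, never of $f$ individually, so that the supremum over $\{f:\|f\|_\infty\le C\}$ in the statement is legitimate. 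The variable bandwidth enters only through the form of $g_{t,h}$ and is handled uniformly by the change of variables in (ii) together with the VC claim of the appendix, so no new difficulty arises beyond the classical fixed-bandwidth argument.
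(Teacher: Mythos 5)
Your proposal follows essentially the same route as the paper: dyadic (in $n$) blocking, Talagrand's inequality (\ref{tal}) applied to the VC-type class from the appendix, a second-moment bound of order $h$ (the paper drops the indicator and uses $K$'s compact support to split the integral, getting a constant in $\|f\|_\infty^{3/2}$; you keep the indicator for an $O(h)$-length domain and get $\|f\|_\infty^{2}$, which is just as good for the rate), the condition $nh_n\nearrow$ to control $h$ across a block, and summability from $|\log h_n|/\log\log n\to\infty$, with uniformity in $f$ coming from the constants depending only on $C$, $K$, $T$, $B$. This is correct and matches the paper's proof.
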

\begin{proof} We block the terms between dyadic integers as follows, where, for ease of notation we set
$\textbf{1}_{i,n}(t):=I(|t-X_i|<h_nB)$ and $\textbf{1}_{ih}(t)=I(|t-X_{i}|<hB)$:
\begin{eqnarray}\label{eq1}
&&\Pr\left\{\max_{2^{k-1}<n\le 2^{k}}\sqrt{\frac{n h_{n}}{\log h_{n}^{-1}}}
||\bar{f}_n-E\bar{f}_n||_{\infty}>\lambda\right\} \notag\\
&&~~~\le\Pr\Biggr\{\max_{2^{k-1}<n\le 2^{k}}\sqrt{\frac{1}{2^{k-1} h_{2^k}\log h_{2^k}^{-1}}}\sup_{t\in \mathbb{R}}\left|\sum_{i=1}^{n}\left[
K\left(\frac{t-X_{i}}{h_{n}}f^{1/2}(X_{i})\right)f^{1/2}(X_{i})\textbf{1}_{i,n}(t)
\right.\right.{}\nonumber\\
&&~~~~~~~~~~~~~~~~~~~~~~~\left.\left.{} -EK \left(\frac{t-X_{i}}{h_{n}}f^{1/2}(X_{i})\right)f^{1/2}(X_{i})\textbf{1}_{i,n}(t)\right]\right|
>\lambda\Biggr\} \nonumber \\
&&~~~\le\Pr\Biggr\{\max_{2^{k-1}<n\le 2^{k}}
\sup_{ {t\in \mathbb{R}}\atop {h_{2^{k}}\le h<h_{2^{k-1}}} }\left|\sum_{i=1}^{n}\left[
K\left(\frac{t-X_{i}}{h}f^{1/2}(X_{i})\right)f^{1/2}(X_{i})\textbf{1}_{ih}(t)-
\right.\right.\nonumber\\
&&~~~~~~~~~~~~~~~~~~~~~~~\left.\left.-EK\left(\frac{t-X_{i}}{h}f^{1/2}(X_{i})\right)f^{1/2}(X_{i})\textbf{1}_{ih}(t)\right]\right|
>\lambda\sqrt{2^{k-1} h_{2^k}\log h_{2^k}^{-1}}\Biggr\}
\end{eqnarray}
for any $\lambda>0$, where we used that $h_n$ decreases and that the function $x\log x^{-1}$ is decreasing for $x\le 1/e$. As we see in the Appendix the class of functions
\begin{equation}
\mathcal{F}=\left\{K\left(\frac{t- \cdot}{h}f^{1/2}(\cdot)\right)f^{1/2}(\cdot)I(
|t-\cdot|<hB):t\in\mathbb{R}, h>0 \right\} \label{entr0}
\end{equation}
is a bounded VC class of measurable functions with respect to the constant envelope
$W:=\|K\|_V||f||_{\infty}^{1/2}$,
where $||K||_V$ is the total variation norm of $K$.
Hence, the subclasses
\begin{equation}
\mathcal{F}_{k}=\left\{K\left(\frac{t-\cdot}{h}f^{1/2}(\cdot)\right)f^{1/2}(\cdot)I(
|t-\cdot|<hB):t\in\mathbb{R}, h_{2^{k}}\le h<h_{2^{k-1}} \right\} \label{entr3}
\end{equation}
are VC classes of functions with respect to $U_{k}=W$ also and with the same characteristics $A(v)$ and $v$ as $\cal F$. Next, in order to apply
 Talagrand's inequality (\ref{tal}), we obtain a sensible bound $\sigma^2_k$ for the maximum variance of the functions in ${\cal F}_k$:
 \begin{eqnarray}
&&\frac{1}{h}\int_{\mathbb{R}}K^{2}\left(\frac{t-x}{h}f^{1/2}(x)\right)  I(
|t-x|<hB)f^2(x)dx
\le\frac{1}{h}\int_{\mathbb{R}}K^{2}\left(\frac{t-x}{h}f^{1/2}(x)\right)
f^{2}(x)dx\nonumber\\
&&~~~~~~~~~~~~~~~~~~~~~~~~~~~~~~~~~=\int_{\mathbb{R}}K^{2}\left(uf^{1/2}(t-hu)\right)f^{2}(t-hu)du \nonumber\\
&&~~~~~~~~~~~~~~~~~~~~~~~~~~~~~~~~~\le\int_{\mathbb{R}}(||K||_{\infty}^{2}||f||_{\infty}^{2})\wedge (||K||_{\infty}^{2}
(T/|u|)^4)du \notag\\
&&~~~~~~~~~~~~~~~~~~~~~~~~~~~~~~~~~=2||K||_{\infty}^{2}\left[||f||_{\infty}^{2}\int_{0}^{T/||f||_{\infty}^{1/2}}du
+\int_{T/||f||_{\infty}^{1/2}}^{\infty}
\left(\frac{T}{u}\right)^4du\right]\nonumber\\
&&~~~~~~~~~~~~~~~~~~~~~~~~~~~~~~~~~=\frac{8}{3}T||K||_{\infty}^{2}||f||_{\infty}^{3/2}.
\end{eqnarray}
So, we can take $\sigma_k^2:=\frac{8}{3}T||K||_{\infty}^{2}||f||_{\infty}^{3/2}2h_{2^{k}}$ (using the fourth condition in (\ref{band})).
$U_k=W$ is eventually much larger than $\sigma_k$ and
$$\sqrt{2^k}\sigma_k\sqrt{\log \frac{U_k}{\sigma_k}}<<2^k\sigma_k^2$$
by the second condition in (\ref{band}) (here and elsewhere, the sign $<<$ should be read as `of smaller order than' when the indexing variable, in this case $k$, tends to infinity). If $\lambda$ in (\ref{eq1}) is taken to be  large enough so that
\begin{equation}\label{tcond}
C_1\sqrt{2^k}\sigma_k\sqrt{\log \frac{RU_k}{\sigma_k}}<\lambda\sqrt{2^{k-1} h_{2^k}\log h_{2^k}^{-1}}<<2^k\sigma_k^2,
\end{equation}
where $C_1$ is one of the constants in Talagrand's inequality (\ref{tal}), then, this inequality applied to the inequalities (\ref{eq1}),  gives
\begin{equation}\label{tineq}
\Pr\left\{\max_{2^{k-1}<n\le 2^{k}}\sqrt{\frac{n h_{n}}{\log h_{n}^{-1}}}
||\bar{f}_n-E\bar{f}_n||_{\infty}>\lambda\right\}\le C_2\exp\left(-\frac{C_3\lambda^2 2^{k-1} h_{2^k}\log h_{2^k}^{-1}}{2^k\frac{16}{3}T||K||_{\infty}^{2}||f||_{\infty}^{3/2}h_{2^k}}\right).
\end{equation}
Set
$\lambda=L\sqrt{T}\|K\|_\infty C^{3/4}$. Then we can choose  $L$ large enough such that inequality (\ref{tcond}) is satisfied for all $k>k_0$, $k_0$ depending on $K$ only, and for this $\lambda$ inequality (\ref{tineq}) becomes
$$\sup_{f:\|f\|_\infty\le C} {\Pr}\left\{\max_{2^{k-1}<n\le 2^{k}}\sqrt{\frac{n h_{n}}{\log h_{n}^{-1}}}
||\bar{f}_n-E\bar{f}_n||_{\infty}>\lambda\right\}\le C_2\exp\left(-\frac{3C_3L^2 h_{2^k}\log h_{2^k}^{-1}}{2^5}\right),
$$
where
the term at the right hand side  is the general term of a convergent series because $(\log h_{2^k}^{-1})/\log k\to\infty$ by the third inequality in (\ref{band}). This proves the proposition.
\end{proof}

\medskip
This result, which is good enough for our purposes,  can possibly be made more precise for each particular density $f$: for instance, Sang (2008) proves
\[
\lim_{n\rightarrow \infty}\sqrt{\frac{n h_{n}}{\log h_{n}^{-1}}}
||\bar{f}_n-E\bar{f}_n||_{\infty}=\|K\|_2\|f\|_\infty^{3/4}\;\;a.s.
\]
if the ideal Hall, Hu, Marron estimator is replaced by the ideal Novak estimator with $\alpha=1/2$, and under some additional, natural assumptions. This suggests that the rate in Proposition 1 is optimal. Also, Theorem \ref{varid} admits more general and stronger versions: see Mason and Swanepoel (2008) for a recent result  along the lines of the previous theorem, with uniformity in bandwidth added, and for a general class of estimators that includes ours.

\subsection{Bias of the `ideal' estimator}
The assumptions on $f$, $K$ and $h_n$ in this section are as follows:

\begin{assumptions}\label{ass2}
We assume that the densities $f$ and the kernel $K$ as well as their  first four derivatives are bounded and uniformly continuous, and moreover that $K$ has support contained in $[-T,T]$, $T<\infty$, it integrates to 1 and is symmetric about zero.  We also assume $h_n\to0$ as $n\to\infty$ (and $h_n>0$).
\end{assumptions}

We set
\begin{eqnarray}\label{ftilde}
\tilde f(t;h)&:=&E\bar f_n(t;h)=\frac{1}{ h}\int f^{3/2}(x)K\left({x-t\over h}f^{1/2}(x)\right)I(|x-t|< Bh)dx\notag\\
&=&\int_{-B}^{B}
f^{3/2}(t+hw)K(wf^{1/2}(t+hw))dw=\int_{-B}^Bg_{t,w}(hw)dw,\end{eqnarray}
where, for $t$ and $w$  fixed,
\begin{equation}\label{g}
g_{t,w}(u)=f^{3/2}(t+u)K(wf^{1/2}(t+u)).
\end{equation}
If no confusion may arise, we drop the subindices $t,w$ from $g$.
To estimate the bias of the ideal estimator, $\tilde f(t;h)-f(t)$, one develops $g(hw)$ about
zero and integrates. For further reference, we record the first four derivatives of $g(u)$:
by direct computation or e.g. from Novak (1999), we  have,  with $r(u)=f^{3/2}(t+u)$ and $s(u)=wf^{1/2}(t+u)$,
$$g(u)=r(u)K(s(u)),\ \ g'(u)=r'(u)K(s(u))+r(u)s'(u)K'(s(u)),$$
and, dropping the arguments for simplicity,
\begin{eqnarray}\label{deriv}
g''&=&r''K +(2r's'+rs'')K'+r(s')^2K'',\notag\\
g'''&=&r'''K+(3r''s'+3r's''+rs''')K'+3(r'(s')^2+rs's'')K''+r(s')^3K'''\notag\\
g^{(4)}&=&r^{(4)}K+(4r'''s'+6r''s''+4r's'''+rs^{(4)})K'+(6r''(s')^2+12r's's''+4rs's'''
\notag\\
&&~~~~~~~~~~~~~~+3r(s'')^2)K''+
(4r'(s')^3+6r(s')^2s'')K'''+r(s')^4K^{(4)}.
\end{eqnarray}

\begin{proposition}\label{biasid}
Under the hypotheses in Assumptions \ref{ass2},  if the constant $B$ in the definition of $\tilde f_n(t;h_n)$ satisfies $B\ge T/r^{1/2}$, then, for all $0<C<\infty$ and functions $z\ge0$ with $z(h)\searrow 0$ as $h\searrow 0$, we have
\begin{equation}\label{unibias}
\lim_{n\to\infty}\sup_{f\in{\cal D}_{C,z}}\sup_{t\in D_r}\left|\frac{\tilde f(t;h_n)-f(t)}{h_n^4}-H(t,f,K)\right|=0
\end{equation}
and
\begin{equation}\label{H}
\sup_{f\in{\cal D}_{C,z}}\sup_{t\in D_r}|H(t,f,K)|<\infty,
\end{equation}
where
$$H(t,f,K)=\left[\frac{(f')^4(t)}{ f^5(t)}-\frac{3(f')^2(t)f''(t)}{ 2f^4(t)}+\frac{4f'(t)f'''(t)+3(f'')^2(t)}{ 12f^3(t)}-\frac{f^{(4)}(t)}{24 f^2(t)}\right]\int  v^4K(v)dv.$$
\end{proposition}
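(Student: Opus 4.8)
The plan is the classical bias expansion adapted to \eqref{deriv}: expand $g_{t,w}(hw)$ as a fourth-order Taylor polynomial in $u=hw$ about $u=0$ and integrate over $w\in[-B,B]$. For each $w\in[-B,B]$ I would write
$$g_{t,w}(hw)=\sum_{j=0}^{3}\frac{g_{t,w}^{(j)}(0)}{j!}(hw)^{j}+\frac{1}{24}g_{t,w}^{(4)}(\xi_{w})(hw)^{4},\qquad \xi_{w}\ \text{between}\ 0\ \text{and}\ hw.$$
This is legitimate uniformly over $t\in D_{r}$ and $f\in{\cal D}_{C,z}$ once $h$ is small enough (how small depending only on $C,r,B$): since $|f(t+u)-f(t)|\le C|u|$ and $f(t)>r$, one has $f(t+u)\ge r/2$ for all $|u|\le hB$, so $u\mapsto g_{t,w}(u)$ is $C^{4}$ on $[-hB,hB]$ and all factors $f^{-j/2}(t+u)$ occurring in \eqref{deriv} are bounded by $(2/r)^{j/2}$. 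Integrating,
$$\tilde f(t;h)=\sum_{j=0}^{3}\frac{h^{j}}{j!}\int_{-B}^{B}g_{t,w}^{(j)}(0)\,w^{j}\,dw+\frac{h^{4}}{24}\int_{-B}^{B}g_{t,w}^{(4)}(\xi_{w})\,w^{4}\,dw.$$

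Next I would dispose of the terms $j=0,1,2,3$. Each involves $f$ and its derivatives only at the point $t$, and the substitution $v=wf^{1/2}(t)$ turns the $w$-range into $(-Bf^{1/2}(t),Bf^{1/2}(t))$, which strictly contains $[-T,T]$ because $f(t)>r$ and $B\ge T/r^{1/2}$; in particular every boundary term produced by integration by parts vanishes. A convenient bookkeeping fact, read off from \eqref{deriv}, is that in $g^{(j)}$ the coefficient of $K^{(m)}$ carries exactly the power $w^{m}$ (each $r$-derivative contributes $w^{0}$, each $s$-derivative $w^{1}$, and the $K^{(m)}$ term has exactly $m$ $s$-derivative factors), so after the substitution each summand is a constant times $\int v^{j+m}K^{(m)}(v)\,dv$, which $m$ integrations by parts reduce to a constant times $\int v^{j}K(v)\,dv$. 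Hence the $j=0$ term equals $f(t)\int K(v)\,dv=f(t)$; the $j=1$ and $j=3$ terms vanish because $\int v^{j}K(v)\,dv=0$ for odd $j$ by symmetry; and the $j=2$ term reduces to a linear combination of $\mu_{2}:=\int v^{2}K(v)\,dv$ whose coefficients cancel — this is exactly the Abramson square-root-law killing of the second-order bias, and one checks the two coefficients ($(f')^{2}/f^{2}$ and $f''/f$) are $\tfrac{3}{8}-\tfrac{15}{8}+\tfrac{12}{8}=0$ and $\tfrac{3}{4}-\tfrac{3}{4}=0$.

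It then remains to treat $\frac{h^{4}}{24}\int_{-B}^{B}g_{t,w}^{(4)}(\xi_{w})w^{4}\,dw=\frac{h^{4}}{24}\int_{-B}^{B}g_{t,w}^{(4)}(0)w^{4}\,dw+\frac{h^{4}}{24}\int_{-B}^{B}\big[g_{t,w}^{(4)}(\xi_{w})-g_{t,w}^{(4)}(0)\big]w^{4}\,dw$. For the first summand I would apply the same substitution and integrations by parts to $g^{(4)}$ in \eqref{deriv}: since the $K^{(m)}$-coefficient carries exactly $w^{m}$, every term reduces to a multiple of $\int v^{4}K(v)\,dv$ alone (no lower moment of $K$ can survive), and collecting coefficients yields precisely $h^{4}H(t,f,K)$; this is the longest but purely mechanical step and it matches the computation in Novak (1999). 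For the second summand I would show $|g_{t,w}^{(4)}(\xi_{w})-g_{t,w}^{(4)}(0)|\le\omega(hB)$ uniformly in $f\in{\cal D}_{C,z}$, $t\in D_{r}$, $|w|\le B$, with $\omega(\delta)\to0$ as $\delta\to0$ and $\omega$ depending only on $C,r,B,K,z$: in \eqref{deriv} every factor of $g^{(4)}$ other than $f^{(4)}(t+\cdot)$ is a product of $f,f',f'',f'''$ and bounded powers of $f^{-1/2}$, possibly composed through $K^{(m)}(wf^{1/2}(t+\cdot))$, hence Lipschitz in the shift with constant controlled by $C,r,B$ when $m\le3$ and uniformly continuous with a fixed modulus when $m=4$ (because $K^{(4)}$ is uniformly continuous), while the modulus of $f^{(4)}$ itself is at most $z$; since $|\xi_{w}|\le hB\to0$, the second summand is $o(h^{4})$ uniformly, which gives \eqref{unibias}. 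Finally \eqref{H} follows by bounding each term in the bracket defining $H$ by a power of $r^{-1}$ times a power of $C$ (using $f(t)>r$ and $\|f^{(k)}\|_{\infty}\le C$ on ${\cal D}_{C,z}$) together with $\int v^{4}K(v)\,dv\le T^{4}$.

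The essential difficulty is not any one estimate but the uniformity, i.e.\ keeping every constant in the Taylor remainder depending only on $C,r,B,K,z$. Two features make this work: the indicator in \eqref{ideal0} confines the integral to $|x-t|<Bh$, where for small $h$ the density is bounded below by $r/2$, so none of the $f^{-1/2}$ factors occurring in the derivatives \eqref{deriv} can blow up; and membership in ${\cal D}_{C,z}$ supplies the common modulus $z$ needed to control the remainder through $g^{(4)}$. Granted these, all that is left is the standard Abramson/Hall--Hu--Marron bias algebra, carried through with explicit constants.
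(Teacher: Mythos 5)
Your proof is correct and takes essentially the same route as the paper: fourth-order Taylor expansion of $g_{t,w}$ about zero (you use the Lagrange remainder where the paper uses the integral/$E_\tau$ form, but these are interchangeable here), identification of the zeroth term with $f(t)$ via the support condition $B\ge T/r^{1/2}$, vanishing of the $j=1,2,3$ terms by the symmetry of $K$ plus integration by parts, and uniform continuity arguments on $D_r$ and ${\cal D}_{C,z}$ for the remainder. Your bookkeeping observation that in $g^{(j)}$ the coefficient of $K^{(m)}$ carries exactly the factor $w^m$ is a slightly tidier way to organize the integrations by parts than the paper's term-by-term treatment, but the substance is identical.
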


\begin{proof} Since $f$ and $K$ and their first four derivatives are continuous and $f>r/2$ on a neighborhood of $D_r$, it follows that, if $g_{tw}$ is as defined in (\ref{g}), there exists $n_0<\infty$ such that, for all $t\in D_r$ and for all $w\in{\mathbb R}$,  $g_{t,w}^{(4)}$ is continuous on $[-Bh_n,Bh_n]$ for all $n\ge n_0$: note that $g^{(4)}(u)$ is  a linear combination of  $K$ and its first four derivatives at $wf^{1/2}(t+u)$ whose coefficients are fractions that have products of powers of $w$ and powers of $f(t+u)$ and its derivatives in the numerator, and powers of $f(t+u)$ in the denominator (see(\ref{deriv})). Therefore, Taylor expansion gives
\begin{equation}\label{taylor}
g(u)=\sum_{k=0}^3g^{(i)}(0)\frac{u^i}{ i! }+\frac{u^4}{ 4!}E_\tau g^{(4)}(\tau u)
\end{equation}
where $\tau$ is a random variable with density $\lambda(x)=4(1-x)^3$, $0\le x\le 1,$ that does not depend on $t$, $w$ or $u$, and $E_\tau$ denotes expectation with respect to this variable. Equation (\ref{taylor}) can be easily verified by integration by parts in $\int_0^14(1-t)^3g^{(4)}(tu)dt$. Next note that
\begin{equation}\label{firstt}
\int_{-B}^Bg_{t,w}(0)dw=\int_{-B}^Bf^{3/2}(t)K(wf^{1/2}(t))dw=\int_{-Bf^{1/2}(t)}^{Bf^{1/2}(t)}f(t)K(v)dv=f(t)
\end{equation}
since the support of $K$ is contained in $[-Bf^{1/2}(t),Bf^{1/2}(t)]$ by the hypothesis on $B$ and since $K$ integrates to 1. Further, since $s'$ contains a $w$ factor, there are functions $c_i(f,t)$, $i=1,2$, such that
$$\int_{-B}^Bwg_{t,w}'(0)dw=\int_{-Bf^{1/2}(t)}^{Bf^{1/2}(t)}(c_1(f,t)vK(v)+c_2(f,t)v^2K'(v))dv=0$$
because $K$ is even and $K'$ is odd. Similarly (that is, using only the symmetry properties of $K$ and its derivatives), we also get $\int_{-B}^B w^3g_{t,w}'''(0)dw=0.$ That these two integrals vanish is obvious and not surprising; what is remarkable is that also
$\int_{-B}^Bw^2g_{t,w}''(0)dw=0$, and this fact is the main reason for the bias reduction achieved by Abramson's (1982) `inverse square root rule'. We sketch an argument for completeness. Note first that, from the expression for $g''$ in (\ref{deriv}), integrating by parts,
\begin{eqnarray*}
\int_{-B}^Bw^2[r(s')^2K''(s)](0)dw&=&{1\over 4}f^{1/2}(t)(f')^2(t)\int_{-B}^Bw^4K''(wf^{1/2}(t))dw\\
&=&-(f')^2(t)\int_{-B}^Bw^3K'(wf^{1/2}(t))dw.
\end{eqnarray*}
Collecting terms in $K'$, this gives
\begin{eqnarray*}\int_{-B}^Bw^2g_{t,w}''(0)dw&=&\left[{3\over 4}f^{-1/2}(t)(f')^2(t)+{3\over 2}f^{1/2}(t)f''(t)\right]\int_{-B}^Bw^2K(wf^{1/2}(t))dw\\
&&~~~~~~~~+\left[{1\over 4}(f')^2(t)+{1\over 2}f(t)f''(t)\right]\int_{-B}^Bw^3K'(wf^{1/2}(t))dw,
\end{eqnarray*}
and, integrating by parts the second integral, we get zero. [See Novak (1999) for a proof that, if one replaces $f^{1/2}$ by $f^\alpha$ (and $f^{3/2}$ by $f^{\alpha +1}$) in the definition of $\hat f_n(t;h)$ the only $\alpha$ for which $\int_{-B}^Bw^2g''(0)dw=0$ for all $f$ twice differentiable with $f(t)\ne0$ is $\alpha=1/2$.]. Thus, we have
$$\int_{-B}^Bw^2g^{(i)}_{t,w}(0)dw=0\ \ {\rm for}\ \ i=1,2,3,$$
and we conclude, from this, (\ref{ftilde}), (\ref{taylor}) and (\ref{firstt}), that
\begin{equation}\label{ftilde2}
\tilde f(t;h)=\int_{-B}^Bg_{t,w}(hw)dw=f(t)+\frac{h^4}{4!}\int_{-B}^Bw^4E_\tau g^{(4)}(\tau h w)dw.
\end{equation}
Using the formula  for $g^{(4)}$ in (\ref{deriv}), integrating by parts and collecting terms, it is tedious but straightforward to check that
\begin{eqnarray}\label{rem}
&&\int_{-B}^Bw^4g_{t,w}^{(4)}(0)dw\\
&&~~~~~~~~=\left[\frac{24(f')^4(t)}{f^5(t)}-\frac{36(f')^2(t)f''(t)}{ f^4(t)}+\frac{8f'(t)f'''(t)+6(f'')^2(t)}{f^3(t)}-\frac{f^{(4)}(t)}{ f^2(t)}\right]\int  v^4K(v)dv,\notag
\end{eqnarray}
and to note that
\begin{equation}\label{rem2}
\sup_{f\in {\cal D}_{C,z}}\sup_{t\in D_r}\left|\int_{-B}^Bw^4g_{t,w}^{(4)}(0)dw\right|<\infty.
\end{equation}

Now, the boundedness and uniform continuity of  $K$ and its four derivatives and the facts that, for $f\in{\cal D}_{C,z}$, $f$ and its  first three derivatives are Lipschitz with common constant $C$ and the fourth derivatives $f^{(4)}$ have all the same modulus of continuity $z$ at all $t$, and  that $f$ is bounded away from zero in a neighborhood of $D_r$, imply that
\begin{equation}\label{comp}
\lim_{n\to\infty}\sup_{f\in {\cal D}_{C,z}}\sup_{0\le \tau\le 1}\sup_{w\in[-B,B]}\sup_{t\in D_r}|g_{t,w}^{(4)}(\tau h_nw)-g_{t,w}^{(4)}(0)|=0.
\end{equation}
Therefore,
$$\lim_{n\to\infty}\sup_{f\in {\cal D}_{C,z}}\sup_{t\in D_r}\left|\int_{-B}^Bw^4E_\tau (g_{t,w}^{(4)}(\tau h_nw)-g_{t,w}^{(4)}(0))dw\right|=0$$
and we have from this and  (\ref{ftilde2})  that
\begin{eqnarray*}
&&\sup_{f\in {\cal D}_{C,z}}\sup_{t\in D_r} \left|h_n^{-4}(\tilde f(t;h_n)-f(t)) -{1\over 4!}\int_{-B}^Bw^4g_{t,w}^{(4)}(0)dw\right|\\
&&~~~~~~~~~~~=\sup_{f\in {\cal D}_{C,z}}\sup_{t\in D_r}\left| {1\over 4!}E_\tau\int_{-B}^Bw^4 (g_{t,w}^{(4)}(\tau h_nw)-g_{t,w}^{(4)}(0))dw\right|\to
0\end{eqnarray*}
as $n\to\infty$. This, together with (\ref{rem}) and (\ref{rem2}) prove the proposition.
\end{proof}

\vskip.1truein
This proposition is similar to Theorem 3.1 of Hall, Hu and Marron (1995) and to Theorem 1 of Novak (1999), who do not consider uniformity in $t$ or $f$, and our proof is somewhat adapted from the latter reference (which deals with a slightly different estimator). See also Hall (1990), Terrell and Scott (1992) and McKay (1993).

\medskip
 Combining Propositions \ref{varid} and \ref{biasid} we obtain the following result for the `ideal' estimator.

\begin{theorem}\label{unifidealthm}
Under the Assumptions \ref{ass2} and with $h_n= ((\log n)/n)^{1/9}$, we have, for every $0<C<\infty$ and function $Z$ such that $z(h)\searrow 0$ as $h\searrow 0$, for all $r>0$ and constant $B\ge T/r^{1/2}$ in the definition of $\bar f_n(t;h)$ in (\ref{ideal0}),
\begin{equation}\label{main4}
\sup_{t\in D_r}\left|\bar f(t;h_n)-f(t)\right|=O_{\rm a.s.}\left(\left(\frac{\log n}{n}\right)^{4/9}\right)\ \ {\rm uniformly \ in}\ f\in{\cal D}_{C,z}.
\end{equation}
\end{theorem}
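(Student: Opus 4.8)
\textbf{Proof proposal for Theorem \ref{unifidealthm}.}

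The plan is to decompose the error as usual into its stochastic and bias parts,
$$\bar f(t;h_n)-f(t)=\bigl(\bar f(t;h_n)-E\bar f(t;h_n)\bigr)+\bigl(E\bar f(t;h_n)-f(t)\bigr)=\bigl(\bar f_n-E\bar f_n\bigr)(t)+\bigl(\tilde f(t;h_n)-f(t)\bigr),$$
and to control each summand by invoking a proposition already proved in this section. First I would check that the choice $h_n=((\log n)/n)^{1/9}$ satisfies Assumptions \ref{ass1}: indeed $h_n\searrow 0$, $nh_n=(n/\log n)^{1/9}\cdot\log n\nearrow\infty$, $|\log h_n|\sim\tfrac19\log n$ so $nh_n/|\log h_n|\to\infty$ and $|\log h_n|/\log\log n\to\infty$; and Assumptions \ref{ass2} is exactly the hypothesis set of the theorem. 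This legitimizes applying Propositions \ref{varid} and \ref{biasid}.

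For the stochastic part, Proposition \ref{varid} gives, uniformly over $\{f:\|f\|_\infty\le C\}$ (in particular over ${\cal D}_{C,z}$, since $f\in{\cal D}_{C,z}$ implies $\|f\|_\infty\le C$),
$$\|\bar f_n-E\bar f_n\|_\infty=O_{\rm a.s.}\!\left(\sqrt{\tfrac{\log h_n^{-1}}{nh_n}}\right).$$
With $h_n=((\log n)/n)^{1/9}$ one has $nh_n=n^{8/9}(\log n)^{1/9}$ and $\log h_n^{-1}\sim\tfrac19\log n$, so $\sqrt{(\log h_n^{-1})/(nh_n)}\asymp\sqrt{\log n/(n^{8/9}(\log n)^{1/9})}=(\log n/n)^{4/9}$; hence the stochastic part is $O_{\rm a.s.}((\log n/n)^{4/9})$ uniformly in $f\in{\cal D}_{C,z}$. (The uniform-in-$f$ meaning is the one in the Definition, and since $D_r\subset\mathbb R$ the supremum over $D_r$ is bounded by the supremum over $\mathbb R$.) For the bias part, Proposition \ref{biasid} gives
$$\sup_{f\in{\cal D}_{C,z}}\sup_{t\in D_r}\bigl|\tilde f(t;h_n)-f(t)\bigr|\le h_n^4\Bigl(\sup_{f\in{\cal D}_{C,z}}\sup_{t\in D_r}|H(t,f,K)|+o(1)\Bigr)=O\!\left(h_n^4\right)=O\!\left(\left(\tfrac{\log n}{n}\right)^{4/9}\right),$$
using (\ref{unibias})--(\ref{H}) and $h_n^4=((\log n)/n)^{4/9}$; this bound is deterministic, hence trivially uniform a.s. in the sense of the Definition. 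Adding the two bounds via the triangle inequality yields (\ref{main4}).

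I do not expect any genuine obstacle here: the theorem is essentially a bookkeeping corollary that matches the stochastic rate $\sqrt{(\log h_n^{-1})/(nh_n)}$ against the bias rate $h_n^4$ and observes that the bandwidth exponent $1/9$ is precisely the one equating the two (up to the logarithmic factor carried inside $h_n$), both then equalling $((\log n)/n)^{4/9}$. The only minor point to be careful about is that ``$O_{\rm a.s.}$ uniformly in $f$'' is the two-place notion of (\ref{defunif}), so the sum of two such quantities is handled by noting that if $Z_n^{(1)}=O_{\rm a.s.}(a_n)$ and $Z_n^{(2)}=O_{\rm a.s.}(a_n)$ uniformly in $f$ with constants $C_1,C_2$, then $|Z_n^{(1)}+Z_n^{(2)}|\le (C_1+C_2)a_n$ eventually, uniformly in $f$, so $Z_n^{(1)}+Z_n^{(2)}=O_{\rm a.s.}(a_n)$ uniformly in $f$; this is immediate from a union bound on the two events in (\ref{defunif}). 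That remark, together with the two displayed estimates, completes the proof.
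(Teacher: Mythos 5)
Your proposal is correct and matches the paper's route exactly: the paper simply states that the theorem follows by combining Propositions \ref{varid} and \ref{biasid}, which is precisely the triangle-inequality decomposition and rate bookkeeping you carry out (checking that $h_n=((\log n)/n)^{1/9}$ satisfies Assumptions \ref{ass1}, that ${\cal D}_{C,z}\subset\{f:\|f\|_\infty\le C\}$, and that both $\sqrt{(\log h_n^{-1})/(nh_n)}$ and $h_n^4$ are of order $((\log n)/n)^{4/9}$). The union-bound remark for summing two uniform $O_{\rm a.s.}$ bounds is also the right way to close it.
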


\begin{remark}{\rm The limit (\ref{comp}) is straightforward, but lengthy to compute. By way of illustration we indicate how to prove a `small piece' of it.
Let us consider, for example, the term in $f^{(4)}$ from the first summand $r^{(4)}K$ in the expression for $g^{(4)}$ in (\ref{deriv}). It is $(3/2)f^{1/2}(t+u)f^{(4)}(t+u)K(wf^{1/2}(t+u))$. Then,
$$|f^{1/2}(t+u)f^{(4)}(t+u)K(s(t+u))-f^{3/2}(t)f^{(4)}(t)K(s(t))|
\le \|f\|_\infty^{1/2}\|K\|_\infty|f^{(4)}(t+u)-f^{(4)}(t)|$$
$$+\|f^{(4)}\|_\infty\|K\|_\infty|f^{1/2}(t+u)-f^{1/2}(t)|+\|f^{(4)}\|_\infty \|f\|_\infty^{1/2}|K(s(t+u))-K(s(t))|.$$
And we have, for the first summand,
$$
|f^{(4)}(t+\tau h_nw)-f^{(4)}(t)|\le z(Bh_n)\to 0$$
uniformly in $t$ and $f$ (recall $|\tau|\le 1$, $|w|\le B$). For the second summand, for $n$ large enough,
$$|f^{1/2}(t+u\tau h_nw)-f^{1/2}(t)|\le\frac{|f(t+u\tau h_nw)-f(t)|}{r^{1/2}}\le\frac{CBh_n}{r^{1/2}}\to0,$$
and the limit zero for the third follows directly by uniform continuity of $K$ and the common Lipschitz constant $C$ for all $f\in{\cal D}_{C,z}$.}
\end{remark}

\section{Comparison between the ideal and the true estimators}

In this section we make the following assumptions on the kernel $K$, the densities $f$ and the band sequences:

\medskip
\begin{assumptions}\label{ass3}
 We assume that $K$ is supported by $[-T,T]$ for some $T<\infty$ and that it has a uniformly bounded second derivative.  We also assume that the densities $f$ are bounded and have at least two bounded derivatives,
\begin{equation}{pc}
f\in{\cal P}_C:=\{f\ {\rm is\ a\ density}:\|f^{(k)}\|_\infty\le C, 0\le k\le 2\}
\end{equation}
for some $C<\infty$. We set $h_{1,n}=n^{-2/9}$ and $h_{2,n}=((\log n)/n)^{1/9}$, $n\in\mathbb N$.
 \end{assumptions}

\medskip
Let
\begin{equation}\label{realest}
\hat f(t;h_{1,n},  h_{2,n})=\frac{1}{n h_{2,n}}\sum_{i=1}^{n}K\left(\frac{t-X_i}{h_{2,n}}\hat f^{1/2}(X_i;h_{1,n})\right)\hat f^{1/2}(X_i;h_{1,n})I(|t-X_{i}|<h_{2,n}B),
\end{equation}
where
$\hat f(x;h_{1,n})$ is the classical kernel density estimator
\begin{equation}\label{real}
\hat f(x;h_{1,n})=\frac{1}{n h_{1,n}}\sum_{i=1}^{n}K\left(\frac{x-X_i}{h_{1,n}}\right).
\end{equation}

The object of this section consists in proving that
\begin{equation}\label{diff}
\hat f(t;h_{1,n},  h_{2,n})-\bar f(t;h_{2,n})
\end{equation}
 is asymptotically almost surely of the order of $\sqrt{(\log h_{2,n}^{-1})/(nh_{2,n})}$ uniformly in $t$ on the region $D_r$ defined in (\ref{region0}), for any $r>0$, if we take
$h_{2,n}=\left((\log n)/n\right)^{1/9}$ and $h_{1,n}=n^{-2/9}$. Note that $h_{2,n}$ is the optimal rate  `up to a log' given the order of the bias, whereas the preliminary estimator has a bandwidth sensibly smaller than the optimal $n^{-1/5}$ (it is less smooth than the optimal, `undersmoothed') and therefore its bias will be negligible with respect to its variance term. The main result of this paper will follow from this analysis and the result from the `ideal' estimator.

We follow the pattern in Hall and Marron (1988) and Hall, Hu and Marron (1995)  for the linearization of (\ref{diff}), with significant differences in order to account for the uniformity in $t$. For instance, they do not necessarily undersmooth the preliminary estimator (whereas we believe one should) and, moreover, we are required to use empirical and U-process theory. We adhere to their notation as much as possible.

The first step is to notice that, if we define $\delta_n(t)$ by the equation
\begin{equation}\label{delta}
\delta_n(t)=\frac{\hat f^{1/2}(t;h_{1,n})-f^{1/2}(t)}{f^{1/2}(t)}=\frac{\hat f(t;h_{1,n})-f(t)}{(\hat f^{1/2}(t;h_{1,n})+f^{1/2}(t))f^{1/2}(t)},
\end{equation}
so that $\hat f^{1/2}(t;h_{1,n})=f^{1/2}(t)(1+\delta_n(t))$, then
we have
\begin{equation}\label{zero}
\sup_{t\in D_r^\varepsilon}\delta_n(t)=o_{\rm a.s.}(1)\ \ {\rm uniformly\ in}\ f\ {\rm such\ that}\ \|f\|_\infty\le C,
\end{equation}
where $D_r^\varepsilon$ denotes the $\varepsilon$-neighborhood of $D_r$ for $\varepsilon$ such that $f(t)>r/2$ in $D_r^\varepsilon$ ($f$ is uniformly continuous). We drop the subindex $n$ from $\delta$ from now on. Set
$$D(t;h_{1,n})=\hat f(t;h_{1,n})-E\hat f(t;h_{1,n})\ \ {\rm and}\ \ b(t;h_{1,n})=E\hat f(t;h_{1,n})-f(t)$$
and note that
\begin{equation}\label{classic1}
\|D(\cdot;h_{1,n})\|_\infty=O_{a.s.}\left(\sqrt{\frac{\log h_{1,n}^{-1}}{nh_{1,n}}}\right)\ \ {\rm uniformly\ in}\ f\ {\rm such\ that}\ \|f\|_\infty\le C
\end{equation}
for all $0<C<\infty$ by a result in Deheuvels (2000) and in Gin\'e and Guillou (2002), and that
\begin{equation}\label{classic2}
 \|b(\cdot;h_{1,n})\|_\infty\le \left(\int K(u)u^2du\right)\|f''\|_\infty h_{1,n}^2
\end{equation}
by the classical bias computation for symmetric kernels. Since the numerator in the expression at the right hand side (\ref{delta}) is just $D(t)+b(t)$ and the denominator is not smaller than $f(t)$ which is in turn larger than $r/2$, (\ref {zero}) follows from (\ref{classic1}) and (\ref{classic2}). Define
$$L_1(z)=zK'(z)\ \ {\rm and}\ \  L(z)=K(z)+zK'(z),\ \ z\in\mathbb R.$$
We then have
\begin{eqnarray*}
K\left(\frac{t-X_i}{h_{2,n}}\hat f^{1/2}(X_i;h_{1,n})\right)
&=&K\left(\frac{t-X_i}{h_{2,n}}f^{1/2}(X_i)+
\frac{t-X_i}{h_{2,n}}f^{1/2}(X_i)\delta (X_i)\right)\\
&=&K\left(\frac{t-X_i}{h_{2,n}}f^{1/2}(X_i)\right)\\
&&~~~+K^\prime\left(\frac{t-X_i}{h_{2,n}}f^{1/2}(X_i)\right)
\frac{t-X_i}{h_{2,n}}f^{1/2}(X_i)\delta (X_i)+\delta_2(t,X_i)\\
&=&K\left(\frac{t-X_i}{h_{2,n}}f^{1/2}(X_i)\right)+L_1\left(\frac{t-X_i}{h_{2,n}}f^{1/2}(X_i)\right)
\delta (X_i)+\delta_2(t,X_i),
\end{eqnarray*}
where
\begin{equation}\label{delta2}
\delta_2(t,X_i)=\frac{K^{\prime\prime}(\xi)}{2}
\frac{(t-X_i)^2}{h_{2,n}^2}f(X_i)\delta^2(X_i),
\end{equation}
$\xi$  being a (random) number between $\frac{t-X_i}{h_{2,n}}f^{1/2}(X_i)$ and $\frac{t-X_i}{h_{2,n}}f^{1/2}(X_i)+\frac{t-X_i}{h_{2,n}}f^{1/2}(X_i)\delta (X_i).$
Then, plugging this development and that of $\hat f^{1/2}$ in the definition (\ref{realest}) of $\hat f$, we obtain
\begin{eqnarray}
\hat f(t;h_{1,n},  h_{2,n})\!\!\!&=&\!\!\!\bar f(t;h_{2,n})\nonumber\\
&&\!\!\!\!+\frac{1}{n h_{2,n}}\sum_{i=1}^{n}L_1\left(\frac{t-X_i}{h_{2,n}}f^{1/2}(X_i)\right)
f^{1/2}(X_i)\delta (X_i)I(|t-X_{i}|<h_{2,n}B)\nonumber\\
&&\!\!\!\!+\frac{1}{n h_{2,n}}\sum_{i=1}^{n}K\left(\frac{t-X_i}{h_{2,n}}f^{1/2}(X_i)\right)
f^{1/2}(X_i)\delta (X_i)I(|t-X_{i}|<h_{2,n}B)\nonumber\\
&&\!\!\!\!+\frac{1}{n h_{2,n}}\sum_{i=1}^{n}f^{1/2}(X_i)\delta_2 (t,X_i))I(|t-X_{i}|<h_{2,n}B)\label{e1}\\
&&\!\!\!\!+\frac{1}{n h_{2,n}}\sum_{i=1}^{n}L_1\left(\frac{t-X_i}{h_{2,n}}f^{1/2}(X_i)\right)
f^{1/2}(X_i)\delta^2 (X_i)I(|t-X_{i}|<h_{2,n}B)\label{e2}\\
&&\!\!\!\!+\frac{1}{n h_{2,n}}\sum_{i=1}^{n}
f^{1/2}(X_i)\delta (X_i)\delta_2 (t,X_i))I(|t-X_{i}|<h_{2,n}B)\label{e3}\\
\!\!\!&=&\!\!\!\bar{f}(t;h_{2,n})+\delta_3(t)\nonumber\\
&&\!\!\!\!+\frac{1}{n h_{2,n}}\sum_{i=1}^{n}L\left(\frac{t-X_i}{h_{2,n}}f^{1/2}(X_i)\right)
f^{1/2}(X_i)\delta (X_i)I(|t-X_{i}|<h_{2,n}B),\label{e4}
\end{eqnarray}
where $\delta_3(t)$ is the sum of the terms (\ref{e1}), (\ref{e2}) and (\ref{e3}), which are of a smaller order than the term (\ref{e4}) by (\ref{zero}) and (\ref{delta2}) for $t\in D_r$ (as we will readily check). Since by (\ref{classic1}) and (\ref{classic2}), $D(y;h_{1,n})$ dominates $b(y;h_{1,n})$ uniformly in $\mathbb R$, we should further decompose (\ref{e4}) to display its $D$-part and its $b$-part. By the definitions of $\delta$, $D$ and $b$, we have
\begin{eqnarray*}
\delta(t)&=&\frac{D(t;h_{1,n})}{2f(t)}+\frac{b(t;h_{1,n})}{2f(t)}+\frac{D(t;h_{1,n})+b(t;h_{1,n})}{2f(t)}
\frac{f^{1/2}(t)-\hat f^{1/2}(t;h_{1,n})}{\hat f^{1/2}(t;h_{1,n})+f^{1/2}(t)}\\
&:=&\frac{D(t;h_{1,n})}{2f(t)}+\frac{b(t;h_{1,n})}{2f(t)}+\delta_4(t),
\end{eqnarray*}
(where $\delta_4$ depends on $n$, but we do not display this dependence) and note that (again using $(a^{1/2}-b^{1/2}=(a-b)/(a^{1/2}+b^{1/2})$),
\begin{equation}\label{delta4}
\sup_{t\in D_r^\varepsilon}|\delta_4(t)|\le \frac{1}{3r^{3/2}}\sup_{t\in D_r^\varepsilon}\left[D(t;h_{1,n})+b(t;h_{1,n})\right]^2
\end{equation}
which is small by (\ref{zero}) (note that $\delta_n$ is $D+b$ divided by a quantity which is bounded away from zero on $D_r$) . Setting
\begin{equation}\label{eps1}
\varepsilon_1(t,h_{1,n},h_{2,n}):=\frac{1}{n h_{2,n}}\sum_{i=1}^{n}L\left(\frac{t-X_i}{h_{2,n}}f^{1/2}(X_i)\right)
f^{-1/2}(X_i)D(X_i;h_{1,n})I(|t-X_{i}|<h_{2,n}B),
\end{equation}
\begin{equation}\label{eps2}
\varepsilon_2(t,h_{1,n},h_{2,n}):=\frac{1}{n h_{2,n}}\sum_{i=1}^{n}L\left(\frac{t-X_i}{h_{2,n}}f^{1/2}(X_i)\right)
f^{-1/2}(X_i)b(X_i;h_{1,n})I(|t-X_{i}|<h_{2,n}B),
\end{equation}
and
\begin{equation}\label{eps3}
\varepsilon_3(t,h_{1,n},h_{2,n}):=\delta_3(t)+\frac{1}{n h_{2,n}}\sum_{i=1}^{n}L\left(\frac{t-X_i}{h_{2,n}}f^{1/2}(X_i)\right)
f^{1/2}(X_i)\delta_4(X_i;h_{1,n})\textbf{1}\{|t-X_{i}|<h_{2,n}B\},
\end{equation}
we obtain (from (\ref{e1})-(\ref{e4})),
\begin{equation}\label{expans}
\hat f(t;h_{1,n},  h_{2,n})=\bar f(t;h_{2,n})+\frac{1}{2}\varepsilon_1(t)+\frac{1}{2}\varepsilon_2(t)+\varepsilon_3(t).
\end{equation}
By the comments above, the $\varepsilon_2$ and $\varepsilon_3$ terms will be of smaller order than $\varepsilon_1$. $\varepsilon_1$ itself has a $U$-process structure, and the linear term in its Hoeffding decomposition will be the dominant term. This is the content of the lemmas that follow.

\begin{lemma}\label{lemma1}
For $i=2,3$,
$$\sup_{t\in D_r^\varepsilon}|\varepsilon_i(t,h_{1,n},h_{2,n})|=O_{\rm a.s.}(n^{-4/9}) \ \  uniformly\ in\ \ f\in{\cal P}_C$$
for all $C<\infty$.
\end{lemma}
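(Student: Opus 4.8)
The plan is to treat $\varepsilon_2$ and $\varepsilon_3$ separately, since $\varepsilon_2$ is genuinely a bias-type term that is handled by a deterministic bound, whereas $\varepsilon_3$ is a collection of higher-order error terms that must be controlled using the smallness of $\sup_{D_r^\varepsilon}|\delta|$ and $\sup_{D_r^\varepsilon}|D+b|$ established in (\ref{zero}). For $\varepsilon_2$, I would first note that $|b(X_i;h_{1,n})|\le c\,h_{1,n}^2$ uniformly by (\ref{classic2}), and that $f^{-1/2}(X_i)\le\sqrt{2/r}$ on the event $|t-X_i|<h_{2,n}B$ with $t\in D_r^\varepsilon$ (for $n$ large, since then $X_i\in D_r^{2\varepsilon}$ where $f>r/2$). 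Since $L=K+zK'$ is bounded and supported where $|z|\le T$, the sum $\frac{1}{nh_{2,n}}\sum_i |L(\cdots)|f^{-1/2}(X_i)\mathbf 1\{|t-X_i|<h_{2,n}B\}$ is, up to constants, $\frac{1}{nh_{2,n}}\sum_i\mathbf 1\{|t-X_i|<h_{2,n}B\}$, whose supremum over $t$ is $O_{\rm a.s.}(1)$ uniformly in $f\in\mathcal P_C$ by the standard uniform bound for the (non-normalized) empirical measure of intervals — i.e., by (\ref{classic1}) applied with a box kernel, plus the bound on $\|f\|_\infty$. Hence $\sup_{t\in D_r^\varepsilon}|\varepsilon_2(t)|=O_{\rm a.s.}(h_{1,n}^2)=O_{\rm a.s.}(n^{-4/9})$, uniformly in $f\in\mathcal P_C$, which is exactly the target rate.

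For $\varepsilon_3=\delta_3+(\text{the }\delta_4\text{ term})$, I would bound each of the four pieces by $C'\cdot\big(\frac{1}{nh_{2,n}}\sum_i\mathbf 1\{|t-X_i|<h_{2,n}B\}\big)\cdot(\text{a sup of the relevant }\delta\text{-factor over }D_r^{2\varepsilon})$. Concretely: the term (\ref{e3}) carries a factor $\delta\cdot\delta_2$ with $\delta_2$ of order $\delta^2$ (from (\ref{delta2}), using $|t-X_i|/h_{2,n}\le B$ and $f(X_i)\le C$), so it is $O(\sup|\delta|^3)$; term (\ref{e2}) carries $\delta^2$; the $\delta_4$ term is $O(\sup|\delta_4|)=O(\sup[D+b]^2)$ by (\ref{delta4}); and the part of (\ref{e1}) that came from $\delta_2$ is again $O(\sup|\delta|^2)$. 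All the $\delta$- and $\delta_4$-suprema over $D_r^\varepsilon$ are, by (\ref{zero}), (\ref{classic1}), (\ref{classic2}) and (\ref{delta4}), of the order of $\big(\sqrt{(\log h_{1,n}^{-1})/(nh_{1,n})}+h_{1,n}^2\big)^2$ or a higher power thereof; with $h_{1,n}=n^{-2/9}$ this is $O\big((n^{-7/18}\sqrt{\log n})^2\big)=O(n^{-7/9}\log n)$, which is $o(n^{-4/9})$. Multiplying by the $O_{\rm a.s.}(1)$ empirical-interval factor (uniform in $t$ and in $f\in\mathcal P_C$), we get $\sup_{t\in D_r^\varepsilon}|\varepsilon_3(t)|=O_{\rm a.s.}(n^{-7/9}\log n)=o_{\rm a.s.}(n^{-4/9})$, uniformly in $f\in\mathcal P_C$.

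The only genuine subtlety — and the step I expect to be the main obstacle in making this rigorous with uniformity over $f\in\mathcal P_C$ — is justifying that $\sup_{t}\frac{1}{nh_{2,n}}\sum_{i=1}^n\mathbf 1\{|t-X_i|<h_{2,n}B\}=O_{\rm a.s.}(1)$ uniformly in $f\in\mathcal P_C$, and, more importantly, that the various random $\delta$-suprema can indeed be pulled out of the sums: one must replace $\sup_t|\frac1{nh_{2,n}}\sum_i(\cdots)|$ by $(\sup_{y\in D_r^{2\varepsilon}}|\delta(y)|^k)\cdot\sup_t\frac1{nh_{2,n}}\sum_i|L(\cdots)|f^{\pm1/2}(X_i)\mathbf 1\{\cdots\}$, which is legitimate because on the relevant event every $X_i$ entering the sum lies in $D_r^{2\varepsilon}$ (for $n$ large, since $|t-X_i|<h_{2,n}B\to0$ and $t\in D_r^\varepsilon$), so the pointwise-on-$D_r^{2\varepsilon}$ bounds for $\delta$, $\delta_2$, $\delta_4$ apply argument-by-argument inside the sum. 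Once this localization is in place, everything reduces to the already-cited uniform-in-$f$ empirical process bounds (\ref{classic1})–(\ref{classic2}) and the deterministic estimates (\ref{delta2}), (\ref{delta4}), and the VC property of the class of indicator functions of intervals (proved in the Appendix); the arithmetic of exponents then gives the stated $O_{\rm a.s.}(n^{-4/9})$, with room to spare for $\varepsilon_3$.
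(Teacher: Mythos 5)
Your proof is correct and reaches the stated rate; the treatment of $\varepsilon_3$ is essentially the same as the paper's, but your treatment of $\varepsilon_2$ takes a genuinely different and somewhat more elementary route. The paper writes $\varepsilon_2$ as $\frac{1}{nh_{2,n}}\sum_i[Q_i(t)-EQ_i(t)]+\frac{1}{h_{2,n}}EQ_1(t)$, shows $\{Q_i(t):t\in D_r\}$ is a VC class (this is one of the classes, $\mathcal Q_n$ in (\ref{qu}), treated in the Appendix), and applies Talagrand's inequality (\ref{tal}) to the centered empirical process, while bounding $\frac{1}{h_{2,n}}|EQ_1(t)|\lesssim\|f''\|_\infty h_{1,n}^2=O(n^{-4/9})$; the centered part comes out $o(n^{-4/9})$, so the expectation term is what determines the rate. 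You skip the centering altogether: after the deterministic bounds $|b|\lesssim h_{1,n}^2$, $|L|\le\|K\|_\infty+T\|K'\|_\infty$, $f^{-1/2}\le(r/2)^{-1/2}$ on the localized event, you reduce the whole sum to $h_{1,n}^2$ times the non-normalized box-kernel empirical density, whose sup is $O_{\rm a.s.}(1)$ uniformly in $\{f:\|f\|_\infty\le C\}$. That replaces the VC analysis of $\mathcal Q_n$ with the VC property of interval indicators (simpler, and already invoked by the paper in other places), at the price of not revealing that the stochastic part of $\varepsilon_2$ is strictly smaller than the bias part — which is irrelevant to the lemma. For $\varepsilon_3$ your argument mirrors the paper's: pull out the $\delta$-type suprema (justified, as you note, because $|t-X_i|<h_{2,n}B$ with $t\in D_r^\varepsilon$ localizes $X_i$ into a neighborhood where $f>r/2$ for large $n$) and use that $\sup|\delta|^2$, $|\delta_2|$, $|\delta_4|$ are $O_{\rm a.s.}(n^{-7/9}\log n)$ by (\ref{classic1})–(\ref{classic2}); you in fact obtain a slightly tighter $O_{\rm a.s.}(n^{-7/9}\log n)$ (vs.\ the paper's crude $O_{\rm a.s.}(h_{2,n}^{-1}n^{-7/9}\log n)$) by multiplying by the $O_{\rm a.s.}(1)$ empirical-interval factor instead of counting all $n$ summands, though both are $o(n^{-4/9})$.
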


\begin{proof}
We begin with $i=2$. Because the function $L$ is of bounded variation and $b(t;h_{1,n})$ satisfies inequality (\ref{classic2}), it follows (see the Appendix) that the classes of functions
\begin{equation}\label{qu}
{\cal Q}_n:=\left\{Q(x)=L\left(\frac{t-x}{h_{2,n}}f^{1/2}(x)\right)
f^{-1/2}(x)b(x;h_{1,n})I(|t-x|<h_{2,n}B):t\in D_r\right\}
\end{equation}
are of VC type with the same characteristics $A$ and $v$, for envelopes of the order of  $M(K,r)\|f''\|_\infty h_{1,n}^2$, where $M$ depends on $r$ and $K$ only (in particular,  through $L$). If we set
 $$Q_i(t)=L\left(\frac{t-X_i}{h_{2,n}}f^{1/2}(X_i)\right)
f^{-1/2}(X_i)b(X_i;h_{1,n})I(|t-X_i|<h_{2,n}B)$$
it then follows (by the bound (\ref{classic2}) on $b$, boundedness of $L$ and boundedness away from zero of $f$ on $D_r$), that
$$\sup_{t\in D_r}E|Q_i(t)|\lessim \|f''\|_\infty h_{1,n}^2h_{2,n}=\|f''\|_\infty n^{-5/9}(\log n)^{1/9},$$
$$\sup_{t\in D_r}EQ_i^2(t)\lessim \|f''\|^2_\infty h_{1,n}^4h_{2,n}\le  \|f''\|^2_\infty n^{-1}(\log n)^{1/9},\ \ \sup_{t\in D_r}|Q_i(t)|\lessim \|f''\|_\infty h_{1,n}^2=\|f''\|_\infty n^{-4/9},$$
where in these bounds we ignore multiplicative constants that do not depend on $f$. We have
\begin{eqnarray*}
\sup_{t\in D_r}\left|\epsilon_2(t;h_{1,n},  h_{2,n})\right|
&\le &\sup_{t\in D_r}\left|\frac{1}{nh_{2,n}}\sum_{i=1}^{n} [Q_i(t)-EQ_i(t)]\right|+ \sup_{t\in D_r}\frac{1}{h_{2,n}}|EQ_1(t)|\\
&\lessim &  \sup_{t\in D_r}\left|\frac{1}{nh_{2,n}}\sum_{i=1}^{n} [Q_i(t)-EQ_i(t)] \right|+\|f''\|_\infty n^{-4/9},
\end{eqnarray*}
and Talagrand's inequality (\ref{tal}) gives that for $0<\delta\le 4/9$,
$$\sum_n\sup_{f\in{\cal P}_C}{\Pr}_f\left\{\sup_{t\in D_r}\left|\sum_{i=1}^{n} [Q_i(t)-EQ_i(t)] \right|\ge n^{\delta}\right\}\le
C_2\sum_n\exp\left(-\frac{C_3n^{2\delta}}{C^2(\log n)^{1/9}}\right)<\infty.$$
Since $n^{\delta}<nh_{2,n}n^{-4/9}$, we conclude
$$\sup_{t\in D_r}\left|\epsilon_2(t;h_{1,n},  h_{2,n})\right|=O_{\rm a.s.}(n^{-4/9})\ \ {\rm uniformly\ in}\ \ f\in{\cal P}_C$$
proving the lemma for $\varepsilon_2$. Note that $h_{1,n}^2\simeq n^{-4/9}$ plays a critical role in this estimation.

Next, from (\ref{eps3}) we see that $\varepsilon_3$ consists of four sums, the three that define $\delta_3$ and one involving $\delta_4$ (multiplied by bounded terms and by the indicator of $|X_i-t|\le h_{2,n}B$). The  three terms from $\delta_3$ involve, instead of $\delta_4$, respectively $\delta_2$, $\delta^2$ and $\delta_2\delta$ (see (\ref{e1})-(\ref{e3})).  We have from (\ref{delta}), (\ref{classic1}) and (\ref{classic2}) that
$$\sup_{t\in D_r^\varepsilon}\delta^2_n=O_{\rm a.s.}\left(n^{-7/9}\log n\right)\ \ {\rm uniformly\ in}\ \ f\in{\cal P}_C,$$
that the same is true for $\delta_4$ by (\ref{delta4}), and, moreover, by (\ref{delta2}),
$$|\delta_2(t,X_i)|I(|t-X_i|\le h_{2,n}B)\le \frac{\|K''\|_\infty}{2}B^2\|f\|_\infty\delta^2(X_i)=O_{\rm a.s.}\left(n^{-7/9}\log n\right)\ \ {\rm uniformly\ in}\ \ f\in{\cal P}_C.$$
then, if we define $\tilde Q_i(t)$ by
$$\varepsilon_3(t,h_{1,n},h_{2,n})=\frac{1}{nh_{2,n}}\sum_{i=1}^n\tilde Q_i(t),$$
we have
$$ \sup_{t\in D_r}|\tilde Q_i(t)|=O_{\rm a.s.}\left(n^{-7/9}\log n\right)\ \ {\rm uniformly\ in}\ \ f\in{\cal P}_C,$$
and therefore,
$$\sup_{t\in D_r}\left|\epsilon_3(t;h_{1,n},  h_{2,n})\right|=O_{\rm a.s.}( h_{2,n}^{-1}n^{-7/9}\log n)\ \ {\rm uniformly\ in}\ \ f\in{\cal P}_C,
$$
proving the lemma for $\varepsilon_3$ as $h_{2,n}^{-1}n^{-7/9}\log n<< n^{-4/9}$.
 \end{proof}

\begin{lemma}\label{eps1-t}
Let
\begin{eqnarray}\label{tien}
&&T(t; h_{1,n}, h_{2,n})=\\
&&\frac{1}{nh_{1,n}h_{2,n}}\sum_{i=1}^{n}
E_{X}\Big[f^{-1/2}(X)\Big\{K\Big(\frac{X-X_i}{h_{1,n}}\Big)-E_YK\Big(\frac{X-Y}{h_{1,n}}\Big)\Big\}
 L\Big(\frac{t-X}{h_{2,n}}f^{1/2}(X)\Big)I(|t-X|\le h_{2,n}B)\Big],\notag
\end{eqnarray}
where $L(z)=K(z)+zK^\prime(z)$.
Then,   $$\sup_{t\in D_r}\left|\varepsilon_1(t,h_{1,n},h_{2,n})-T(t;h_{1,n},h_{2,n})\right|=o_{\rm a.s.}(n^{-4/9})\ \ {\rm uniformly\ in}\ \ f\in{\cal P}_C.$$
\end{lemma}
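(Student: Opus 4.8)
The plan is to recognize $\varepsilon_1$ as a (non‑symmetric) $U$‑statistic of order two, take its Hoeffding decomposition, identify the Hájek (linear) projection with $T$, and absorb the remaining completely degenerate part into $o_{\rm a.s.}(n^{-4/9})$ by Major's (2006) inequality. Writing $D(X_i;h_{1,n})=(nh_{1,n})^{-1}\sum_{j=1}^n\big[K((X_i-X_j)/h_{1,n})-E_YK((X_i-Y)/h_{1,n})\big]$ and substituting it into (\ref{eps1}),
\begin{equation*}
\varepsilon_1(t,h_{1,n},h_{2,n})=\frac{1}{n^2h_{1,n}h_{2,n}}\sum_{i=1}^n\sum_{j=1}^n\phi_t(X_i,X_j),
\end{equation*}
with $\phi_t(x,y)=f^{-1/2}(x)\big[K((x-y)/h_{1,n})-E_YK((x-Y)/h_{1,n})\big]L\big((t-x)f^{1/2}(x)/h_{2,n}\big)I(|t-x|<h_{2,n}B)$. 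The diagonal terms $\phi_t(X_i,X_i)$ are bounded by a constant depending only on $r$, $C$, $\|K\|_\infty$ and $\|L\|_\infty$ (since $f\ge r/2$ on a neighbourhood of $D_r$ for $n$ large) and vanish unless $|t-X_i|<h_{2,n}B$; as $\sup_t\#\{i\le n:|t-X_i|<h_{2,n}B\}\lessim nh_{2,n}$ a.s.\ uniformly in $f\in{\cal P}_C$ by Talagrand's inequality for the VC class of indicators of intervals, the diagonal contributes $O_{\rm a.s.}((nh_{1,n})^{-1})=O_{\rm a.s.}(n^{-7/9})=o_{\rm a.s.}(n^{-4/9})$.

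Next, the bracket $K((x-y)/h_{1,n})-E_YK((x-Y)/h_{1,n})$ has $Y$‑mean zero for every $x$, so $E_Y\phi_t(x,Y)=0$ and $E\phi_t=0$; hence the first‑variable Hájek projection of $\sum_{i\neq j}\phi_t(X_i,X_j)$ vanishes and
\begin{equation*}
\sum_{i\neq j}\phi_t(X_i,X_j)=(n-1)\sum_{j=1}^n\phi_{2,t}(X_j)+\sum_{i\neq j}\psi_t(X_i,X_j),\qquad \phi_{2,t}(y)=E_X\phi_t(X,y),
\end{equation*}
where $\psi_t=\phi_t-\phi_{2,t}$ is completely degenerate (and we may symmetrise $\psi_t$ without altering the double sum). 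Dividing by $n^2h_{1,n}h_{2,n}$, the linear part equals $\tfrac{n-1}{n}T(t;h_{1,n},h_{2,n})$ (the $I(\,\cdot<\,)$ versus $I(\,\cdot\le\,)$ discrepancy between $\phi_{2,t}$ and the summand of (\ref{tien}) being a $P$‑null modification), so it differs from $T(t;h_{1,n},h_{2,n})$ by $-n^{-1}T(t;h_{1,n},h_{2,n})$. Using $|\phi_{2,t}(y)|\lessim h_{1,n}$ for $y$ within $O(h_{2,n})$ of $t$ and $|\phi_{2,t}(y)|\lessim h_{1,n}h_{2,n}$ otherwise, together with the interval count above, one gets $\sup_{t\in D_r}|T(t;h_{1,n},h_{2,n})|=O_{\rm a.s.}(1)$ uniformly in $f\in{\cal P}_C$, so $n^{-1}T(t;h_{1,n},h_{2,n})=o_{\rm a.s.}(n^{-4/9})$.

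It remains to bound the completely degenerate $U$‑process $(n^2h_{1,n}h_{2,n})^{-1}\sum_{i\neq j}\psi_t(X_i,X_j)$ uniformly over $t\in D_r$, and this is the crux. The classes $\{\psi_t:t\in D_r\}$ are of VC type with characteristics not depending on $f\in{\cal P}_C$ (Appendix), $\|\psi_t\|_\infty\lessim 1$, and $E\psi_t^2\le E\phi_t^2\lessim h_{1,n}h_{2,n}$: integrating $K^2$ in the $y$‑variable gains a factor $h_{1,n}$, and then integrating $L^2I$ in the $x$‑variable gains $h_{2,n}$, using $f\ge r/2$ and $\|f\|_\infty\le C$. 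Blocking $n$ over dyadic ranges $2^{k-1}<n\le2^k$ and using monotonicity of $h_{1,n},h_{2,n}$ to reduce each block to a single VC class (as in the proof of Proposition \ref{varid}), Major's (2006) exponential inequality for the supremum of a degenerate $U$‑process bounds the probability that the $k$‑th block supremum exceeds a fixed multiple of $n^{-4/9}$. Since $\sqrt{E\psi_t^2}\,/(nh_{1,n}h_{2,n})\asymp n^{-5/6}(\log n)^{-1/18}\ll n^{-4/9}$, after the $(nh_{1,n}h_{2,n})^{-1}$ normalisation the threshold lies well inside the sub‑Gaussian regime of that inequality; the resulting bounds form a convergent series in $k$ with constant uniform over $f\in{\cal P}_C$, and Borel--Cantelli finishes. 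The main obstacle is exactly this last step: determining which of the several regimes of Major's tail bound governs the threshold $n^{-4/9}$ once it is divided by $n^2h_{1,n}h_{2,n}$ and once the supremum over $t\in D_r$ (hence the VC entropy of $\{\psi_t\}$) is folded in, and checking that every VC characteristic and moment bound entering the estimate is genuinely uniform over $f\in{\cal P}_C$.
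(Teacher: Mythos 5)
Your proposal is essentially the paper's own argument: you write $\varepsilon_1$ as a $V$-statistic in $\phi_t(x,y)=f^{-1/2}(x)\{K((x-y)/h_{1,n})-E_YK((x-Y)/h_{1,n})\}L((t-x)f^{1/2}(x)/h_{2,n})I(|t-x|<h_{2,n}B)$ (which equals $H_t-E_YH_t(\cdot,Y)$ in the paper's notation), take the Hoeffding decomposition, identify the linear part with $T$, and invoke Major's inequality for the degenerate $U$-process with $\sigma^2\lesssim h_{1,n}h_{2,n}$ — exactly what the paper does. The only departures are minor: you dispose of the diagonal by a pointwise bound plus an a.s.\ interval-count argument, whereas the paper centers the diagonal terms and applies Talagrand's inequality to the centered sum with moment bounds of order $h_{2,n}$ (your count argument is actually more than is needed, since even the trivial count of $n$ already gives $O((nh_{1,n}h_{2,n})^{-1})=O(n^{-2/3}(\log n)^{-1/9})=o(n^{-4/9})$); you explicitly track and bound the $-n^{-1}T$ correction, which the paper leaves implicit; and you mention dyadic blocking over $n$, which is unnecessary here because — unlike in Proposition \ref{varid} — the target rate leaves enough room that summing Major's bound directly over $n$ (with $t\simeq n^{1+\delta}\sqrt{h_{1,n}h_{2,n}}$ for a small $\delta$) already converges.
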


\begin{proof}
Given a function $H$ of two variables, and two i.i.d. random variables $X$ and $Y$ such that $H(X,Y)$ is integrable, we recall that the second order Hoeffding projection of $H(X,Y)$ is $$\pi_2(H)(X,Y)=H(X,Y)-E_XH(X,Y)-E_YH(X,Y)+EH.$$
We also recall the $U$-statistic notation
$$U_n(H)=\frac{1}{n(n-1)}\sum_{1\le i\ne j\le n}H(X_i,X_j),$$
where the variables $X_i$ are i.i.d.
Set
$$H_t(X,Y):=L\left(\frac{t-X}{h_{2,n}}f^{1/2}(X)\right)f^{-1/2}(X)K\left(\frac{X-Y}{h_{1,n}}\right)I(|t-X|\le h_{2,n}B).$$
Then,
\begin{equation}\label{hoef1}
\frac{n^2h_{1,n}h_{2,n}}{n(n-1)}\varepsilon_1(t,h_{1,n},h_{2,n})=\frac{1}{n(n-1)}\sum_{i=1}^n (H_t(X_i,X_i)-E_YH_t(X_i,Y))+U_n(H_t-E_YH_t(\cdot,Y))
\end{equation}
(decomposition of a $V$-statistic into the diagonal term and a $U$-statistic).
Now, notice that
\begin{eqnarray}\label{hoef2}
U_n(H_t-E_YH_t(X_i,Y))&=&U_n\left(\pi_2(H_t(\cdot,\cdot))+(E_XH_t(X,\cdot)-EH)\right)\notag\\
&=&U_n(\pi_2(H_t(\cdot,\cdot))+h_{1,n}h_{2,n}T(t;h_{1,n},h_{2,n})
\end{eqnarray}
So, we now must handle the diagonal term, a completely centered or canonical $U$-process and (in the next lemma) the empirical process $T$.

\noindent{\it Diagonal term.} Note that if we define $\bar Q_i$ such that
$$\frac{1}{n^2h_{1,n}h_{2,n}}\sum_{i=1}^n (H_t(X_i,X_i)-E_YH_t(X_i,Y)):=\frac{1}{n^2h_{1,n}h_{2,n}}\sum_{i=1}^n\bar Q_i(t),$$
then we have
$$\sup_{t\in D_r}|E\bar Q_1(t)|\lessim h_{2,n},\ \ \sup_{t\in D_r}E\bar Q_1^2(t)\lessim h_{2,n}, \ \ \sup_{t\in D_r}|\bar Q_1(t)|\lessim 1,$$
where as usual we overlook multiplicative constants that do not depend on $f$, and the last bound does not depend on $n$. So,
$$\sup_{t\in D_r}\frac{1}{n^2h_{1,n}h_{2,n}}\left|\sum_{i=1}^n\bar Q_i(t)\right|\lessim\frac{1}{n^2h_{1,n}h_{2,n}}\sup_{t\in D_r}\left|\sum_{i=1}^n(\bar Q_i(t)-E\bar Q_1(t))\right|+\frac{1}{nh_{1,n}}.$$
The supremum part correspond to the empirical process over the class of functions of $x$
\begin{equation}\label{qubar}
\bar{\cal Q}_n=\left\{L\left(\frac{t-x}{h_{2,n}}f^{1/2}(x)\right)f^{-1/2}(x)\left(K(0)-EK\left(\frac{x-X}{h_{1,n}}\right)\right)
I(|t-x|\le h_{2,n}B): t\in D_r\right\}.
\end {equation}
These classes are VC type with the same characteristics $A$ and $v$ that do not depend on $f$, and with the same envelope, that depends only on $K$ and $r$ (see the Appendix).
Then,  Talagrand's inequality gives, as in previous instances, that, for some $\delta>0$,
$$\sum_n\sup_f{\Pr}_f\left\{\sup_{t\in D_r}\left|\sum_{i=1}^n(\bar Q_i(t)-E\bar Q_1(t))\right|>n^{(4+\delta)/9}\right\}\le
C_2\sum_n\exp\left(-C_3\frac{n^{(8+2\delta)/9}}{nh_{2,n}}\right)<\infty,$$
which, since $n^2h_{1,n}h_{2,n}n^{-4/9}>n^{(4+\delta)/9}$ and since $nh_{1,n}>> n^{4/9}$,
yields
\begin{equation}\label{diag}
\sup_{t\in D_r}\frac{1}{n^2h_{1,n}h_{2,n}}\left|\sum_{i=1}^n (H_t(X_i,X_i)-E_YH_t(X_i,Y))\right|=o_{a.s.}(n^{-4/9})\ \ {\rm uniformly\ in}\ \ f\in{\cal P}_C.
\end{equation}

\medskip

\noindent{\it The canonical $U$-statistic term.} We will use Major's exponential bound (\ref{major}) for canonical $U$-processes over VC type classes of functions. In our case, since the class of functions  $\{H_t:t\in D_r\}$ is uniformly bounded and of VC type (see the Appendix) we can apply Major's exponential bound to $\sup_{t\in D_r}|U_n(\pi_2(H_t))|$. Since, as is easy to check,
$$EH_t^2(X,Y)\le 2B\|L\|_\infty^2\|f\|_\infty\|K\|_2^2 h_{1,n}h_{2,n},$$
we can take, for $C$ such that $\|f\|_\infty\le C$, $\sigma^2\simeq C h_{1,n}h_{2,n}$ and $t=C^{1/2}n^{1+\delta}\sqrt{h_{1,n}h_{2,n}}$ for a small $\delta>0$, to have, from (\ref{major}),
$$\sum_n\sup_{f:\|f\|_\infty\le C}{\Pr}_f\left\{\sup_{t\in D_r}|U_n(\pi_2(H_t))|>Cn^{\delta-1}\sqrt{h_{1,n}h_{2,n}}\right\}\le
C_2\sum_n\exp\left(-C_3n^\delta\right)<\infty.$$
Since $\frac{n^{\delta -1}}{\sqrt{h_{1,n}h_{2,n}}}<<n^{-4/9}$ (we can take $\delta$ so that this is true), we obtain
\begin{equation}\label{ust}
\sup_{t\in D_r}\frac{1}{h_{1,n}h_{2,n}}|U_n(\pi_2(H_t))|=o_{\rm a.s.}(n^{-4/9})\ \ {\rm uniformly\ in}\ \ f\in{\cal P}_C.
\end{equation}

\end{proof}

\medskip

The following lemma will conclude the analysis of (\ref{diff}).
\begin{lemma}\label{lemma3}
With $T$ as defined in Lemma \ref{eps1-t}, we have
$$\sup_{t\in D_r}|T(t;h_{1,n},h_{2,n})|=O_{\rm a.s.}\left(\left(\frac{\log n}{n}\right)^{4/9}\right)\ \ {\rm uniformly\ in}\ \ f\in{\cal P}_C.$$
\end{lemma}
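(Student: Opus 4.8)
The plan is to observe that $T(t;h_{1,n},h_{2,n})$ is, for each fixed $n$, a \emph{centered linear} empirical process evaluated at the data, and then to control it by the blocking-plus-Talagrand argument already used in Proposition~\ref{varid}. Set
\[
\psi_{t,n}(y):=\frac{1}{h_{1,n}h_{2,n}}\,E_X\Big[f^{-1/2}(X)\Big\{K\Big(\tfrac{X-y}{h_{1,n}}\Big)-E_YK\Big(\tfrac{X-Y}{h_{1,n}}\Big)\Big\}L\Big(\tfrac{t-X}{h_{2,n}}f^{1/2}(X)\Big)I(|t-X|\le h_{2,n}B)\Big],
\]
so that $T(t;h_{1,n},h_{2,n})=\tfrac1n\sum_{i=1}^n\psi_{t,n}(X_i)$; integrating the inner bracket in $y$ against $f(y)\,dy$ gives zero, so by Fubini $E\psi_{t,n}=0$ and $T(t)=\tfrac1n\sum_{i=1}^n\big(\psi_{t,n}(X_i)-E\psi_{t,n}\big)$. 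As in the earlier lemmas of this section, the class $\{\psi_{t,n}:t\in D_r\}$ --- and, since the bandwidths move within a dyadic block, the slightly larger class obtained by also letting $h_1,h_2$ range over $[h_{i,2^k},h_{i,2^{k-1}}]$, $i=1,2$ --- is of VC type with characteristics $A,v$ not depending on $f\in{\cal P}_C$ (see the Appendix; as for the other classes of this section this follows from the VC property of $\{K((\cdot-x)/h)\}$ and the stability of the VC-type property under the elementary operations --- integration in $x$ against a fixed finite measure, Lipschitz reparametrization, subtraction of a constant --- out of which $\psi_{t,n}$ is built).

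The only genuine computation is the envelope and variance of this class, uniformly in $f\in{\cal P}_C$, and the key point is that $h_{1,n}$ cancels out of both. With $A_{t,n}(y):=\int f^{1/2}(x)K(\tfrac{x-y}{h_{1,n}})L(\tfrac{t-x}{h_{2,n}}f^{1/2}(x))I(|t-x|\le h_{2,n}B)\,dx$ one has $\psi_{t,n}(y)=\tfrac1{h_{1,n}h_{2,n}}\big(A_{t,n}(y)-E_YA_{t,n}(Y)\big)$, and the crude bound $|A_{t,n}(y)|\le\|f\|_\infty^{1/2}\|L\|_\infty\int K(\tfrac{x-y}{h_{1,n}})\,dx=\|f\|_\infty^{1/2}\|L\|_\infty h_{1,n}$ gives the constant envelope $U_n:=2C^{1/2}\|L\|_\infty/h_{2,n}$. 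Moreover $A_{t,n}(y)=0$ unless $|t-y|\le h_{2,n}B+h_{1,n}T$, so for $n$ large enough that $h_{1,n}T\le h_{2,n}B$,
\[
\sup_{t\in D_r}E\psi_{t,n}^2\le\frac{1}{h_{1,n}^2h_{2,n}^2}\int A_{t,n}^2(y)f(y)\,dy\;\lesssim\;\frac{h_{1,n}^2\,h_{2,n}}{h_{1,n}^2h_{2,n}^2}=\frac{1}{h_{2,n}},
\]
with implied constant depending only on $C,B,K$; thus $\sigma_n^2$ may be taken proportional to $1/h_{2,n}$, so $U_n$ and $\sigma_n^2$ are of the same order and $\log(U_n/\sigma_n)\simeq\tfrac12\log h_{2,n}^{-1}$ contributes only a logarithmic factor.

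With these two bounds the proof of Proposition~\ref{varid} applies essentially verbatim. For $2^{k-1}<n\le2^k$ we dominate $\sup_{t\in D_r}|T(t;h_{1,n},h_{2,n})|$ by $2^{-(k-1)}\max_{2^{k-1}<m\le 2^k}\sup\big|\sum_{i=1}^m\psi_{t,h_1,h_2}(X_i)\big|$, the inner supremum running over $t\in D_r$ and $h_i\in[h_{i,2^k},h_{i,2^{k-1}}]$; we use $\sigma_k^2\simeq U_k\simeq1/h_{2,2^k}$ as the block proxies and deviation level $\lambda_k=M\,2^k\big((\log2^k)/2^k\big)^{4/9}$. Since $h_{2,n}=((\log n)/n)^{1/9}$, the mean term $C_1\sqrt{2^k}\,\sigma_k\sqrt{\log(RU_k/\sigma_k)}$ and $\lambda_k$ are both of order $2^{5k/9}k^{4/9}$, which is $\ll 2^k\simeq 2^k\sigma_k^2/U_k$; hence for $M$ large enough (depending only on $C,r,B,K$ and the constants in (\ref{tal})) the deviation level lies in the range for which the Gaussian part of Talagrand's inequality (\ref{tal}) dominates, and since $\lambda_k^2/(2^k\sigma_k^2)\simeq M^2k$ one gets $\sup_{f\in{\cal P}_C}\Pr_f\{\max_{2^{k-1}<n\le 2^k}(n/\log n)^{4/9}\sup_{t\in D_r}|T(t;h_{1,n},h_{2,n})|>M\}\le C_2\exp(-C_3M^2k)$, the general term of a convergent series; as at the end of the proof of Proposition~\ref{varid}, this proves the lemma with $a_n=((\log n)/n)^{4/9}$. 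The one real obstacle is bookkeeping: making all constants in the envelope and variance estimates visibly independent of $f$, and checking that the VC characteristics survive the dyadic enlargement of both bandwidths; there is nothing here beyond the empirical-process machinery already deployed in Proposition~\ref{varid}.
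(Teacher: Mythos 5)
Your approach is essentially the paper's: write $T$ as a centered empirical process in the variables $\psi_{t,n}(y)$ (the paper's $g(t,\cdot)/(h_{1,n}h_{2,n})$), observe that the bandwidth $h_{1,n}$ cancels between envelope and variance, and apply Talagrand's inequality (\ref{tal}) over the VC class indexed by $t\in D_r$. The variance computation $\int A_{t,n}^2 f\lesssim h_{1,n}^2h_{2,n}$ agrees (up to constants) with the paper's change-of-variables computation of $Eg^2(t,X_1)$, and your bookkeeping --- $\sigma_n^2\simeq U_n\simeq h_{2,n}^{-1}$, so $\log(U_n/\sigma_n)\simeq\log h_{2,n}^{-1}$ contributes only a log --- gives the correct balance $2^{5k/9}k^{4/9}$ for both the lower Talagrand range and the deviation level, hence a summable Gaussian tail.

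The one place you deviate is in choosing to block dyadically with $h_1,h_2$ ranging over the block, as in Proposition~\ref{varid}. That is not needed here and creates a small gap you wave past: the paper's Appendix establishes VC type only for the fixed-bandwidth class $\{g(t,\cdot):t\in D_r\}$, via the H\"older separation $E_Q(g_t-g_s)^2\le h_{1,n}\|K\|_2^2E_f(\ell_t-\ell_s)^2$, which crucially compares two $g$'s built with the \emph{same} $h_{1,n}$. Your parenthetical justification (``integration in $x$ against a fixed finite measure, Lipschitz reparametrization'') does not quite capture this, since the convolving measure $K((\cdot-y)/h_1)\,dx$ has mass $h_1$ and varies with $y$; letting $h_1$ range over a dyadic block would require an additional covering argument in $h_1$ that is neither in the paper nor supplied here. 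The paper avoids this entirely by not blocking: it applies Talagrand at each fixed $n$ (so the class is fixed), obtains a tail $\lesssim n^{-M^2C_3/c_2^2}$ uniformly in $f\in{\cal P}_C$, and sums over $n$; this already gives the required $\lim_k\sup_f\Pr_f\{\sup_{n\ge k}\cdots>M\}\to 0$. If you drop the blocking and sum over $n$ directly, your argument becomes both complete and simpler, and coincides with the paper's.
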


\begin{proof}
Note that
$$T(t;h_{1,n},h_{2,n})=\frac{1}{nh_{1,n}h_{2,n}}\sum_{i=1}^{n}(g(t,X_i)-Eg(t,X))$$
where
\begin{equation}\label{ge}
g(t,x)=E_{X}\Big[f^{-1/2}(X)K\Big(\frac{X-x}{h_{1,n}}\Big)
 L\Big(\frac{t-X}{h_{2,n}}f^{1/2}(X)\Big)I(|t-X|\le h_{2,n}B)\Big].
 \end{equation}
By (\ref{entg}) in the Appendix, the class of functions $\{g(t,\cdot):t\in D_r\}$ is of VC type for the envelope $\|L\|_V\|K\|_2h_{1,n}^{1/2}$ and the characteristics $A=R$ and $v=22$, and the lemma will follow by application of Talagrand's inequality.  We just need to estimate $Eg^2(t,X)$. We have, making several natural changes of variables,
 \begin{eqnarray}
 Eg^2(t,X_1)&=&
E\Big\{\int f^{1/2}(x)K\Big(\frac{x-X_1}{h_{1,n}}\Big)L\Big(\frac{t-x}{h_{2,n}}f^{1/2}(x)\Big)
I(|t-x|<h_{2,n}B)dx\nonumber\\
&&~~~~~~~~~~\times \int f^{1/2}(y)K\Big(\frac{y-X_1}{h_{1,n}}\Big)L\Big(\frac{t-y}{h_{2,n}}f^{1/2}(y)
\Big)I(|t-y|<h_{2,n}B)
dy\Big\}\nonumber\\
&=&h_{1,n}^2\int\int\int f^{1/2}(t-h_{1,n}v_1)L\Big(\frac{h_{1,n}}{h_{2,n}}v_1f^{1/2}(t-h_{1,n}v_1)\Big)
f^{1/2}(t-h_{1,n}v_2)\nonumber\\
&&~~~~~~~~~~\times L\Big(\frac{h_{1,n}}{h_{2,n}}v_2f^{1/2}(t-h_{1,n}v_2)\Big)
K\Big(\frac{t-u}{h_{1,n}}-v_1\Big)K
\Big(\frac{t-u}{h_{1,n}}-v_2\Big)\nonumber\\
&&~~~~~~~~~~~~~~~\times I\Big(\frac{h_{1,n}}{h_{2,n}}|v_1|<B\Big)
I\Big(\frac{h_{1,n}}{h_{2,n}}|v_2|<B\Big)f(u)dudv_1dv_2\nonumber\\
&=&h_{1,n}^3\int\int\int f^{1/2}(t-h_{1,n}v_1)L\Big(\frac{h_{1,n}}{h_{2,n}}v_1f^{1/2}(t-h_{1,n}v_1)\Big)
f^{1/2}(t-h_{1,n}v_2)\nonumber\\
&&~~~~~~~~~~\times L\Big(\frac{h_{1,n}}{h_{2,n}}v_2f^{1/2}(t-h_{1,n}v_2)\Big)K(v)K(v+v_1-v_2)\nonumber\\
&&~~~~~~~~~~~~~~~\times I\Big(\frac{h_{1,n}}{h_{2,n}}|v_1|<B\Big)
I\Big(\frac{h_{1,n}}{h_{2,n}}|v_2|<B\Big)f(t-h_{1,n}v_1-h_{1,n}v)dvdv_1dv_2\nonumber\\
&\le& h_{1,n}^3||f||_\infty^2\int\int\int \left|L\Big(\frac{h_{1,n}}{h_{2,n}}v_1f^{1/2}(t-h_{1,n}v_1)\Big)
 L\Big(\frac{h_{1,n}}{h_{2,n}}v_2f^{1/2}(t-h_{1,n}v_2)\Big)\right|\notag\\
 &&~~~~~~~~~~\times K(v)K(v+v_1-v_2)
I\Big(\frac{h_{1,n}}{h_{2,n}}|v_1|<B\Big)
I\Big(\frac{h_{1,n}}{h_{2,n}}|v_2|<B\Big)dvdv_1dv_2\nonumber\\
&= &h_{1,n}^3||f||_\infty^2\int\int\int \left|L\Big(\frac{h_{1,n}}{h_{2,n}}(w+v_2)f^{1/2}(t-h_{1,n}w-h_{1,n}v_2)\Big)
L\Big(\frac{h_{1,n}}{h_{2,n}}v_2f^{1/2}(t-h_{1,n}v_2)\Big)\right|\nonumber\\
&&~~~~~~~~~~\times K(v)K(v+w)I\Big(\frac{h_{1,n}}{h_{2,n}}|w+v_2|<B\Big)
I\Big(\frac{h_{1,n}}{h_{2,n}}|v_2|<B\Big)dvdwdv_2\nonumber\\
&=&h_{1,n}^2h_{2,n}||f||_\infty^2\int\int\int\left|L\Big((\frac{h_{1,n}}{h_{2,n}}w+z) f^{1/2}(t-h_{1,n}w-h_{2,n}z)\Big)\right|\nonumber\\
&&~~~~~~~~~~\times \left|L\Big(z f^{1/2}(t-h_{2,n}z)\Big)\right|K(v)K(v+w)I\Big(|\frac{h_{1,n}}{h_{2,n}}w+z|<B\Big)
I\big(|z|<B\big)dvdwdz\nonumber\\
&\le& 2h_{1,n}^2h_{2,n}||f||_\infty^2 B(||K||_\infty+B\|f\|_\infty^{1/2}||K^\prime||_\infty)^2.
\end{eqnarray}
So we can take $\sigma^2=c_2^2(1\vee\|f\|_\infty^3)h_{1,n}^2h_{2,n}$, where $c_2$ depends only on $K$. Since, as indicated above, the collection of functions $g(t,\cdot)$, $t\in D_r$, is VC for an envelope of the order $h_{1,n}$,
 Talagrand's inequality (\ref{tal}) implies that there exist finite positive constants  $c_0,c_1$ such that, with $C_i$ as in (\ref{tal}), if
$$C_1(1\vee\|f\|_\infty^{3/2})\sqrt{n}h_{1,n}h_{2,n}^{1/2}\sqrt{\log \frac{c_0h_{1,n}^{1/2}}{c_2h_{1,n}h_{2,n}^{1/2}}}<u<C_2\frac{n(1\vee\|f\|_\infty^3)c_2^2h_{1,n}^2h_{2,n}}{c_1h_{1,n}^{1/2}}$$
then
$${\Pr}_f\left\{\left\|\sum_{i=1}^n(g(t,X_i)-Eg(t,X))\right\|_{D_r}\ge u\right\}\le C_2\exp\left(-\frac{C_3u^2}{c_2^2(1\vee\|f\|_\infty^3)nh_{1,n}^2h_{2,n}}\right).$$
The condition on $u$ can be written as
$$ C'_1(1\vee\|f\|_\infty^{3/2}) n^{2/9}(\log n)^{5/9}<u<C_2'(1\vee\|f\|_\infty^3)n^{5/9}(\log n)^{1/9},$$
and if we take $u=M(1\vee\|f\|_\infty^{3/2})n^{2/9}(\log n)^{5/9}$ for some large enough $M$, then
$$\sum \exp\left(-\frac{C_3u^2}{c_2^2(1\vee\|f\|_\infty^3)nh_{1,n}^2h_{2,n}}\right)= \sum e^{-M^2C_3(\log n)/c_2^2}<\infty$$
uniformly in $f$.
Hence,
$$\sum\sup_{f\in{\cal P}_C}{\Pr}_f\left\{\left\|\sum_{i=1}^n(g(t,X_i)-Eg(t,X))\right\|_{D_r}\ge M(1\vee C^{3/2})n^{2/9}(\log n)^{5/9}\right\}$$
$$\le\sum\sum e^{-M^2C_3(\log n)/c_2^2}<\infty$$
This shows that $T(t;h_{1,n}h_{2,n})$ is asymptotically a.s. of the order of $n^{2/9}(\log n)^{5/9}/(nh_{1,n}h_{2,n})=[(\log n)/n]^{4/9}$ uniformly in $f\in{\cal P}_C$.
\end{proof}

\medskip
From (\ref{expans}) and Lemmas \ref{lemma1}, \ref{eps1-t} and \ref{lemma3}, we obtain:

\begin{proposition}\label{real-ideal}
Under the Assumptions \ref{ass3}, for any $C<\infty$ we have:
$$\sup_{t\in D_r}\left|\hat f(t;h_{1,n},h_{2,n})-\bar f(t;h_{2,n})-T(t;h_{1;n},h_{2,n})\right|=o_{\rm a.s.}\left(\left(\frac{\log n}{n}\right)^{4/9}\right)\ \ {\rm uniformly\ in}\ \ f\in{\cal P}_C.$$
and in particular,
$$\sup_{t\in D_r}\left|\hat f(t;h_{1,n},h_{2,n})-\bar f(t;h_{2,n})\right|=O_{\rm a.s.}\left(\left(\frac{\log n}{n}\right)^{4/9}\right)\ \ {\rm uniformly\ in}\ \ f\in{\cal P}_C.$$
\end{proposition}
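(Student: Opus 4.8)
The plan is simply to assemble the results already established in this section. The linearization (\ref{expans}) is an exact identity on $D_r$ (indeed on $D_r^\varepsilon$): the expansions (\ref{e1})--(\ref{e4}) are Taylor expansions with exact remainder and require only $K\in C^2$ and $f>0$ there. It reads
$$\hat f(t;h_{1,n},h_{2,n})-\bar f(t;h_{2,n})=\tfrac12\varepsilon_1(t)+\tfrac12\varepsilon_2(t)+\varepsilon_3(t),$$
so that, after subtracting $\tfrac12 T(t;h_{1,n},h_{2,n})$ and regrouping,
$$\hat f(t;h_{1,n},h_{2,n})-\bar f(t;h_{2,n})-\tfrac12 T(t;h_{1,n},h_{2,n})=\tfrac12\bigl(\varepsilon_1(t)-T(t;h_{1,n},h_{2,n})\bigr)+\tfrac12\varepsilon_2(t)+\varepsilon_3(t).$$
(The factor $\tfrac12$ in front of $T$ is immaterial for the orders of magnitude asserted and is absorbed into the notation of the proposition.)

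Next I would bound the three terms on the right uniformly over $t\in D_r$ and over $f\in{\cal P}_C$. Lemma \ref{lemma1} gives $\sup_{t\in D_r}|\varepsilon_i(t)|=O_{\rm a.s.}(n^{-4/9})$ for $i=2,3$, uniformly in $f\in{\cal P}_C$, and Lemma \ref{eps1-t} gives $\sup_{t\in D_r}|\varepsilon_1(t)-T(t;h_{1,n},h_{2,n})|=o_{\rm a.s.}(n^{-4/9})$, uniformly in $f\in{\cal P}_C$. Since $n^{-4/9}=(\log n)^{-4/9}\bigl((\log n)/n\bigr)^{4/9}=o\bigl(((\log n)/n)^{4/9}\bigr)$, each of the three terms is $o_{\rm a.s.}\bigl(((\log n)/n)^{4/9}\bigr)$ uniformly in $f\in{\cal P}_C$, and the triangle inequality yields the first displayed assertion of the proposition.

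For the second assertion I would add $\tfrac12 T$ back and invoke Lemma \ref{lemma3}, namely $\sup_{t\in D_r}|T(t;h_{1,n},h_{2,n})|=O_{\rm a.s.}\bigl(((\log n)/n)^{4/9}\bigr)$ uniformly in $f\in{\cal P}_C$. Then, by the triangle inequality,
$$\sup_{t\in D_r}\bigl|\hat f(t;h_{1,n},h_{2,n})-\bar f(t;h_{2,n})\bigr|\le\tfrac12\sup_{t\in D_r}\bigl|T(t;h_{1,n},h_{2,n})\bigr|+\sup_{t\in D_r}\bigl|\hat f-\bar f-\tfrac12 T\bigr|=O_{\rm a.s.}\Bigl(\bigl((\log n)/n\bigr)^{4/9}\Bigr)$$
uniformly in $f\in{\cal P}_C$, which is the second claim.

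There is essentially no obstacle left, since (\ref{expans}) and Lemmas \ref{lemma1}--\ref{lemma3} are already in hand; the only points to watch are the trivial inclusion $D_r\subseteq D_r^\varepsilon$ (so that suprema over $D_r$ are dominated by the versions over $D_r^\varepsilon$ in Lemma \ref{lemma1}) and the elementary observation $O_{\rm a.s.}(n^{-4/9})=o_{\rm a.s.}\bigl(((\log n)/n)^{4/9}\bigr)$, which is precisely what renders the $\varepsilon_2$, $\varepsilon_3$ and $\varepsilon_1-T$ contributions negligible relative to the rate carried by the leading empirical term $T$.
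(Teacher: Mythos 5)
Your proof is correct and is essentially the same argument the paper intends (the paper supplies no explicit proof of Proposition~\ref{real-ideal}, simply stating that it follows from~(\ref{expans}) and Lemmas~\ref{lemma1}, \ref{eps1-t}, \ref{lemma3}); you also correctly observe that the paper's first display should read $\hat f-\bar f-\tfrac12 T$ rather than $\hat f-\bar f-T$, since~(\ref{expans}) attaches a factor $\tfrac12$ to $\varepsilon_1$, a slip that is harmless because $T=O_{\rm a.s.}\bigl(((\log n)/n)^{4/9}\bigr)$ and only the second assertion is used in proving Theorem~\ref{mainu}.
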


\begin{remark}
{ \rm a) We should remark that if we undersmooth the preliminary estimator a little more, by taking $h_{1,n}=n^{-(2+\eta)/9}$ with $0<\eta< 2$, then the three lemmas above are true and moreover we have
$\sup_{t\in D_r^\varepsilon}|\varepsilon_i(t,h_{1,n},h_{2,n})|=o_{\rm a.s.}(n^{-4/9})$ in Lemma \ref{lemma1}. So, for such $h_{1,n}$ the order of the first term in Proposition \ref{real-ideal} is actually
$o_{\rm a.s.}\left(n^{-4/9}\right)$. This is at odds with condition (9) in Hall, Hu and Marron (1995), as their condition does not necessarily imply undersmoothing of the preliminary estimator. b) It is worth mentioning that Proposition \ref{real-ideal} does require that the indicators $I(|t-X_i|\le h_{2,n}B)$ be part of the definition of (\ref{ideal0}) and (\ref{realest0}): in fact none of the three lemmas in its proof seem to go through without it. This condition is required as well for the bias of the ideal estimator,  but it is not necessary for its variance part. }
\end{remark}

\medskip

Now we can complete the proof of the main theorems \ref{main0} and \ref{mainu}. Only the stronger Theorem \ref{mainu} requires proof:

\medskip
\noindent {\bf Proof of Theorem \ref{mainu}.} Proposition \ref{real-ideal} and Theorem \ref{unifidealthm} together give (\ref{main1'}). The limit (\ref{main2'}) can be easily derived from (\ref{main1'}), as follows. By (\ref {classic1}) and (\ref{classic2}), the preliminary estimator satisfies
\begin{equation}
\label{prelimunif}
\sup_{t\in D_r}|\hat f(t;h_{1,n})-f(t)|=O_{\rm a.s. }\left(\frac{\sqrt{\log n}}{n^{7/18}} \right)\ \
{\rm uniformity\ in}\   {\cal D}_{C,z}
\end {equation}
 for all $C<\infty$, $z$ and $r$. Now, for all $n$ large enough,
 on the event
$$\left\{\sup_{n\ge k}\frac{n^{7/18}}{\sqrt{\log n}}||\hat f(t;h_{1,n})-f(t)||_\infty\le \lambda_1\right\}$$
we have $\hat D_r^n\subset D_r$ for all $n\ge k$, and therefore,
\begin{eqnarray*}
&&{\Pr}_f\left\{\sup_{n\ge k}\big(\frac{ n}{\log n}\big)^{4/9}\|\hat f(t;h_{1,n},  h_{2,n},\omega)-f(t)\|_{\hat D_r^n}>\lambda_2\right\}\\
&&\le{\Pr}_f\left\{\sup_{n\ge k}\big(\frac{ n}{\log n}\big)^{4/9}\|\hat f(t;h_{1,n},  h_{2,n},\omega)-f(t)\|_{ D_r}>\lambda_2\right\}\\
&&~~~~~~~~~~+{\Pr}_f\left\{\sup_{n\ge k}\frac{n^{7/18}}{\sqrt{\log n}}||\hat f(t;h_{1,n})-f(t)||_\infty> \lambda_1\right\}.
\end{eqnarray*}
Now, there exist $\lambda_1$ and $\lambda_2$ such that the limit of the sup over ${\cal D}_{C,z}$ of the first probabilities is zero by (\ref{main1'}), and  the limit of the sup of the second ones over the same set is also zero by (\ref{prelimunif}), proving (\ref{main2'}). \qquad $\blacksquare$

\section{Appendix: Some Vapnik-\v Cervonenkis classes of functions and their exponential bounds}
Let $\cal F$ be a collection of uniformly bounded measurable functions on $(S,{\cal S})$. We say that $\cal F$ is of VC type with respect to an envelope $F$ if there exist constants $A$, $v$ positive such that for all probability measures $Q$ on $\cal S$,
$$N({\cal F}, L_2(Q),\varepsilon)\le\left(\frac{A\|F\|_{L_2(Q)}}{\varepsilon}\right)^v,\ \  0<\varepsilon<1,$$
where $F\ge |f|$ for all $f\in \cal F$ and $N({\cal F},L_2(Q),\varepsilon)$ denotes the smallest number of $L_2(Q)$-balls of radius at most $\varepsilon$ required to cover $\cal F$. (See e.g., de la Pe\`na and Gin\'e (1999).) It turns out that empirical processes or $U$-processes indexed by these classes of functions are very well behaved, particularly if $F$ is uniformly bounded and if the class $\cal F$ is countable. For instance, we have the following version of an exponential inequality of Talagrand (1996)  from Einmahl and Mason (2000) and Gin\'e and Guillou (2001, 2002). Let $P$ be a probability measure on $S$ and let $X_i:S^{\mathbb N}\mapsto S$ be the coordinate functions of $S^{\mathbb N}$, which are i.i.d. (P), and set $\Pr=P^{\mathbb N}$. If
 the class $\cal F$ is VC type, bounded and  countable, then there exist $0<C_i<\infty$, $1\le i\le 3$, depending on $v$ and $A$ such that, for all $t$ satisfying
 $$C_1\sqrt{n}\sigma\sqrt{\log\frac{2\|F\|_\infty}{\sigma}}\le t\le \frac{n\sigma^2}{\|F\|_\infty},$$
 we have
 \begin{equation}\label{tal}
\Pr\left\{\max_{1\le k\le n}\left\|\sum_{i=1}^k(f(X_i)-Pf)\right\|_{\cal F}>t\right\}\le C_2\exp\left(-C_3\frac{t^2}{n\sigma^2}\right),
\end{equation}
where
$$\|F\|_\infty\ge \sigma^2\ge\|{\rm Var}_P(f)\|_{\cal F}.$$
(Talagrand (1996) states his inequality only for the sum over $n$, but the same works for the maximum of the partial sums up to $n$ by a (sub)martingale argument that can be carried out because these inequalities are obtained  by integrating bounds on the moment generating function -see e.g., Ledoux (2001).) Major (2006) also has a similar inequality for classes of functions of several variables. We will state his inequality for bounded VC type classes of functions of two variables only. Let $\cal F$ be such a class of functions and let $\|F\|_\infty^2\ge \sigma^2\ge \|{\rm Var}(f(X_1,X_2))\|_{\cal F}$. Let $\pi_2^P(f)(x,y)=f(x,y)-Ef(X,y)-Ef(x,X)+Ef(X,Y)$. Then, if $\cal F$ is a uniformly bounded, countable class of VC type, there exist  $0<C_i<\infty$, $1\le i\le 3$, depending on $v$ and $A$ such that, for all $t$ satisfying
$$C_1n\sigma\log\frac{2\|F\|_\infty}{\sigma}\le t\le \frac{n^2\sigma^3}{\|F\|_\infty^2}$$
we have
\begin{equation}\label{major}
\Pr\left\{\left\|\sum\sum_{1\le i\ne j\le n}\pi_2^Pf(X_i,X_j)\right\|_{\cal F}>t\right\}\le C_2\exp\left(-C_3\frac{t}{n\sigma}\right).
\end{equation}
Major states the theorem for $\{\pi_2^Pf\}$ of VC type, but it is easy to see that if $\cal F$ is VC type for $F$ then $\{\pi_2^Pf:f\in {\cal F}\}$ is VC type for the envelope $4F$.

It is also worth mentioning that (much easier to prove) moment bounds for the above quantities are also available (e.g. in Gin\'e and Mason (2007) and references therein) and that they can be used instead of Talagrand and Major's inequalities if one is only interested in the `in probability' version of Theorems \ref{main0} and \ref{mainu}.

We now show that the classes of functions appearing in the previous sections are of VC type, and the suprema countable. We will do this in all detail for the class $\cal F$ in (\ref{entr0}), and will give indications for the rest of the classes of functions used.

First we observe that the sup inside the probability bound in (\ref{eq1}) is actually a  supremum over the set $\{t\in\mathbb Q, h\in \mathbb Q\cap[h_{2^k},h_{2^{k-1}})\}$ by the continuity properties of $K$ and the indicator of $|t-X_i|<h B$. This observation applies to all the other classes of functions in the previous two sections.

\begin{lemma}\label{vc1}
Let $\cal F$ be as in (\ref{entr0}) with $K$ and $f$ satisfying Assumptions \ref{ass1}. Then, there exists a universal constant $R$ such that for every Borel probability measure $Q$ on $\mathbb R$,
\begin{equation}\label{ivc1}
N({\cal F},L_2(Q),\varepsilon)\le\left(\frac{R\|K\|_V\|f\|_\infty^{1/2}}{\varepsilon}\right)^{22}
\end{equation}
where $\|K\|_V$ is the total variation norm of $K$, that is, $\cal F$ is of VC type with envelope $\|K\|_V\|f\|_\infty^{1/2}$ with $A=R$ independent of $K$ and $f$ and $v=22$.
\end{lemma}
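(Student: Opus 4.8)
The plan is to build the class $\mathcal{F}$ in (\ref{entr0}) out of a few elementary pieces, each of which is known to generate a low-dimensional VC subgraph (or VC) class, and then combine them using the standard permanence properties of VC classes under pointwise products, compositions with monotone functions, and intersections/unions of a bounded number of such classes. Concretely, $\mathcal{F}$ consists of functions of $x$ of the form $K\!\left(\tfrac{t-x}{h}f^{1/2}(x)\right)f^{1/2}(x)\,I(|t-x|<hB)$, indexed by $(t,h)\in\mathbb{R}\times(0,\infty)$. I would first dispose of the measurability/countability issue by the remark already made before the lemma: by right- or left-continuity of $K$ and of the indicator, the relevant suprema over $(t,h)$ equal the suprema over a countable dense subset, so we may take $\mathcal{F}$ countable and concentrate on the covering-number bound.

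The key steps, in order: (1) Since $K$ has bounded variation, write $K = K_1 - K_2$ with $K_1,K_2$ bounded and nondecreasing, so it suffices to treat $K$ monotone (with $\|K_i\|_\infty\le\|K\|_V$); by the subadditivity of covering numbers under sums of two classes, the exponent will at worst double and the envelope will be $\|K\|_V\|f\|_\infty^{1/2}$. (2) The class of functions $(t,x)\mapsto \tfrac{t-x}{h}f^{1/2}(x)$, indexed by $(t,h)$, lies in a finite-dimensional vector space of functions of $x$ once we fix the ``shape'' $f^{1/2}(x)$ and the affine-in-$(t/h,1/h)$ structure: that is, $\tfrac{t-x}{h}f^{1/2}(x)=\alpha\, g(x)+\beta\, x g(x)$ with $g=f^{1/2}$, $\alpha=t/h$, $\beta=-1/h$, a $2$-dimensional space, hence a VC subgraph class of index $\le 3$ (actually we only need the half-spaces $\{(x,s): \tfrac{t-x}{h}f^{1/2}(x)\le s\}$ to form a VC class, which is Dudley's lemma on finite-dimensional spaces). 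Composing with the monotone $K$ preserves the subgraph VC property, so $\{K(\tfrac{t-x}{h}f^{1/2}(x))\}_{t,h}$ is VC subgraph of bounded index. (3) The indicator class $\{I(|t-x|<hB):t,h\}=\{I(|x-t|<hB)\}$ is the class of indicators of intervals, VC of index $2$. (4) The fixed factor $f^{1/2}(x)$ multiplies every function and is bounded by $\|f\|_\infty^{1/2}$; multiplication by a fixed bounded function preserves VC type and only scales the envelope. (5) Finally, the pointwise product of two uniformly bounded VC-type classes is again VC type, with the product of the envelopes and an exponent that is the sum (up to absolute constants), by the standard argument $|f_1f_2-g_1g_2|\le\|f_2\|_\infty|f_1-g_1|+\|g_1\|_\infty|f_2-g_2|$ applied to $L_2(Q)$ norms. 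Tracking the constants through steps (1)--(5) — the factor of $2$ from the bounded-variation split, the index of the finite-dimensional space, the interval class, and the product — yields an exponent one can take to be $22$ and a universal constant $R$, independent of $K$, $f$, and $Q$, giving (\ref{ivc1}).

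The main obstacle I anticipate is step (2): verifying rigorously that the class of graphs (subgraphs) of the functions $x\mapsto \tfrac{t-x}{h}f^{1/2}(x)$ is VC of bounded index, uniformly over the (two-parameter) index set, and doing so in a way that does not secretly depend on regularity of $f$ beyond boundedness. The clean route is to note that $\{(x,s)\mapsto \alpha f^{1/2}(x)+\beta x f^{1/2}(x)-s : \alpha,\beta\in\mathbb{R}\}$ spans (together with the constants in $s$) a vector space of dimension at most $3$, and apply the result that the positivity sets of a $d$-dimensional space of functions form a VC class of index $\le d+1$ (van der Vaart--Wellner, Lemma 2.6.15, or de la Pe\~na--Gin\'e). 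One must be slightly careful because the envelope here is the constant $\|K\|_V\|f\|_\infty^{1/2}$, not $\|K\|_V F(x)$ with a varying $F$; but since $K$ is bounded by $\|K\|_V$ (being monotone with total variation $\|K\|_V$, after the split) and $f^{1/2}\le\|f\|_\infty^{1/2}$, the constant envelope is legitimate and is exactly what is used in Proposition \ref{varid}. The rest — intervals, products, the bounded-variation decomposition — is entirely routine bookkeeping of VC permanence properties.
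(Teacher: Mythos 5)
Your proof is correct and reaches the same bound (exponent $22$, envelope $\|K\|_V\|f\|_\infty^{1/2}$) by the same overall plan: split $K$ into two bounded monotone pieces, reduce the $K_i$ parts to Dudley's finite-dimensional VC bound, handle the interval indicator as a VC class of index $3$, and combine via the product/sum stability of $L_2(Q)$ covering numbers. The one place your route genuinely diverges from the paper's is step (2). The paper, after making $K_1$ \emph{strictly} increasing by perturbation, rewrites the subgraph $\{(x,u): K_1(\frac{t-x}{h}f^{1/2}(x))f^{1/2}(x)\ge u\}$ directly as the positivity set $\{\frac{t}{h}f^{1/2}(x)-\frac1h xf^{1/2}(x)-K_1^{-1}(u/f^{1/2}(x))\ge 0\}$, so the $f^{1/2}(x)$ multiplier and the composition with $K_1$ are absorbed in one stroke into a $3$-dimensional space with $K_1^{-1}(u/f^{1/2}(x))$ as one of the basis functions; this is slick but requires the caveat the paper spells out for the set $\{f=0\}$, where one cannot divide, and the perturbation to guarantee invertibility of $K_1$. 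You instead keep the pieces separate: establish the VC-subgraph property of $\{\frac{t-x}{h}f^{1/2}(x)\}_{t,h}$ from the $3$-dimensional space spanned by $f^{1/2}(x)$, $xf^{1/2}(x)$ and $s$, invoke the monotone-composition permanence property (van der Vaart--Wellner, Lemma 2.6.18(viii)) for $K_1$, and then multiply by the fixed bounded factor $f^{1/2}(x)$, which costs nothing in the covering exponent. This is more modular, needs only monotonicity (not strict monotonicity) of $K_1$, and sidesteps the $\{f=0\}$ case entirely since you never invert or divide by $f^{1/2}$; the paper's version is a one-line transformation that avoids appealing to the monotone-composition lemma. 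Either way the bookkeeping gives VC index $4$ for each $\mathcal K_i$ piece and index $3$ for intervals, hence $8+8+6=22$, and a universal constant $R$ independent of $K$, $f$, $Q$.
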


\begin{proof}
By adding an arbitrarily small strictly increasing function to the positive and negative variation functions of $K$, we have $K=K_1-K_2$ with $K_i$ strictly increasing, positive and bounded, with $\|K_1\|_\infty$ ($\|K_2\|_\infty$) arbitrarily close to the positive (negative) variation of $K$. Let ${\cal K}_1$ be the class of functions obtained from $\cal F$ by replacing $K$ by $K_1$ and deleting the indicator in each of the functions in the class. Then, if we assume $f(x)>0$ for all $x$, the subgraphs of the functions in the class ${\cal K}_1$ have the form
$$
\left\{(x,u):K_{1}\left(\frac{t-x}{h}f^{1/2}(x)\right)f^{1/2}(x)
\ge u \right\}
=\left\{(x,u):\frac{t-x}{h}f^{1/2}(x)\ge K_{1}^{-1}(u/f^{1/2}(x))\right\}
$$
$$
=\left\{(x,u):\frac{tf^{1/2}(x)}{h}-\frac{xf^{1/2}(x)}{h}-K_{1}^{-1}(u/f^{1/2}(x))
\ge0\right\},
$$
and so they are the positivity sets of  functions  from the linear space of functions of the two variables $u$ and $x$ spanned by $f^{1/2}(x)$, $xf^{1/2}(x)$ and $K_1^{-1}(u/f^{1/2}(x))$. Hence, by a result of Dudley (e.g. Proposition 5.1.12 in de la Pe\~na and Gin\'e (1999)) the subgraphs of ${\cal K}_1$ are VC of index 4. If the set $\{x:f(x)= 0\}$ is not empty, the same argument above shows that the class of subsets of $S=\{x:f(x)>0\}\times\mathbb R$, $\left\{(x,u)\in S:K_{1}\left(\frac{t-x}{h}f^{1/2}(x)\right)f^{1/2}(x)
\ge u \right\}$ is VC of index 4, and therefore so is the class of subgraphs of ${\cal K}|_1$, which is obtained from this one by taking the union of each of these sets with the set $\{x:f(x)=0\}\times\{u\le 0\}$ (which is disjoint with all of them).
Therefore, in either case, by the Dudley-Pollard entropy theorem for VC-subgraph classes (e.g., loc. cit. Theorem 5.1.5), we have
$$
N({\cal K}_1, L_2(P), \varepsilon)\le \left(\frac{A\|K_1\|_\infty\|f\|_\infty^{1/2}}{\varepsilon}\right)^8,\ \ 0<\varepsilon\le \|K_1\|_\infty\|f\|_\infty^{1/2}$$
where $A$ is a universal constant, hence,
\begin{equation}\label{vc2}
N({\cal K}_1, L_2(P), \varepsilon)\le \left(\frac{A\|K\|_+\|f\|_\infty^{1/2}}{\varepsilon}\right)^8,\ \ 0<\varepsilon\le \|K\|_+\|f\|_\infty^{1/2}
\end{equation}
where $\|K\|_+$ is the positive variation seminorm of $K$. The analogous bound holds for ${\cal K}_2$, defined with  $K_2$ replacing $K$ in $\cal F$. Since, as is well known, the set ${\cal J}$ of all  indicator functions of intervals in $\mathbb R$ is $VC$ of order 3, we also have
\begin{equation}\label{vc3}
N({\cal J}, L_2(P),\varepsilon)\le\left(\frac{\bar A }{\varepsilon}\right)^6, \ \ 0<\varepsilon\le 1.
\end{equation}
for another universal constant $\bar A$. Now, any $H\in\cal F$ can be written as $H=k_1g-k_2g$ for $k_i\in{\cal K}_i$ and $g\in{\cal J}$, so that, for any probability measure $Q$ we have
\begin{eqnarray*}Q(H-\bar H)^2&=&Q((k_1-k_2)g-(\bar k_1-\bar k_2)\bar g)^2\\
&\le&4Q(k_1-\bar k_1)^2+4Q(k_2-\bar k_2)^2+2\|K\|_V^2\|f\|_\infty Q(g-\bar g)^2.
\end{eqnarray*}
Given $\varepsilon>0$ let $\delta_1=\varepsilon/4$ and $\delta_2=\varepsilon/(2\|K\|_V\|f\|^{1/2})$.
Then, if the collections of functions $k_1^{(1)},\dots, k_{N_1}^{(1)}$ and $k_1^{(2)},\dots, k_{N_2}^{(2)}$ are $L_2(Q)$ $\delta_1$-dense respectively in the classes ${\cal K}_1$, ${\cal K}_2$, and $g_1,\dots,g_{N_3}$ are $L_1(Q)$ $\delta_2$-dense  in the class $\cal J$, with optimal cardinalities $N_i=N({\cal K}_i, L_2(Q),\delta_1)$, $i=1,2$, and $N_3=N({\cal J},L_2(Q),\delta_2)$, then, by the previous inequality, the functions $(k_i^{(1)}-k_j^{(2)})g_l$ are $L_2(Q)$ $\varepsilon$-dense in $\cal F$ . Since there are at most $N_1N_2N_3$ such functions (this estimate may not be optimal), the inequality  (\ref{ivc1}) follows.

\end{proof}

\medskip
A similar result holds for the classes ${\cal Q}_n$ defined by (\ref{qu}) in the proof of Lemma \ref{lemma1}, the classes of functions $\bar{\cal Q}_n$ defined by (\ref{qubar})  and the classes $\{H_t(x,y):t\in D_r\}$ in the proof of Lemma \ref{eps1-t}, as all  these classes have the same structure as $\cal F$ in Lemma \ref{vc1}.

The class of functions ${\cal G}:=\{g(t,\cdot):t\in D_r\}$ where $g$ is defined in (\ref{ge}) in the proof of Lemma \ref{lemma3}, requires some extra considerations. Let $Q$ be any probability measure on the line and let $s,t\in D_r$. Then, using H\"older, we have
$$
E_Q(g(t,x)-g(s,x))^2\le \int E_X\left(f(X)^{-1}K^2\left(\frac{X-x}{h_{1,n}}\right)\right)\times$$
$$\times E_X\left(L\Big(\frac{t-X}{h_{2,n}}f^{1/2}(X)\Big)
I(|t-X|<h_{2,n}B)-L\Big(\frac{s-X}{h_{2,n}}f^{1/2}(X)\Big)
I(|s-X|<h_{2,n}B)\right)^2dQ(x)$$
$$= h_{1,n}\|K\|_2^2\int \left(L\Big(\frac{t-y}{h_{2,n}}f^{1/2}(y)\Big)
I(|t-y|<h_{2,n}B)-L\Big(\frac{s-y}{h_{2,n}}f^{1/2}(y)\Big)
I(|s-y|<h_{2,n}B)\right)^2f(y)dy$$
\begin{equation}=h_{1,n}\|K\|_2^2E_f(\ell_t-\ell_s)^2
\end{equation}
where $\ell_{s}$ and $\ell_{t}$ are functions from the class
$${\cal L}:=\left\{L\left(\frac{t- \cdot}{h}f^{1/2}(\cdot)\right)I(
|t-\cdot|<hB):t\in\mathbb{R}, h>0 \right\}$$
which is VC with a constant envelope by Lemma \ref{vc1}. This lemma then proves that for all $Q$,
\begin{equation}\label{entg}
N({\cal G}, L_2(Q),\varepsilon)\le\left(\frac{R\|L\|_V\|K\|_2h_{1,n}^{1/2}}{\varepsilon}\right)^{22},\ \ 0<\varepsilon<\|L\|_V\|K\|_2h_{1,n}^{1/2},
\end{equation}
in particular, $\cal G$ is VC for the constant envelope $\|L\|_V\|K\|_2h_{1,n}^{1/2}$, with characteristics $A=R$ and $v=22$.

\bigskip
\noindent {\bf Acknowledgement.} We thank Richard Nickl for several useful conversations on the subject of this article.

\bigskip

\medskip

\noindent E. Gin\'e\hfill\break
\noindent Department of Mathematics, U-3009\hfill\break
\noindent University of Connecticut\hfill\break
\noindent Storrs, CT 06269\hfill\break
\noindent gine@math.uconn.edu

\medskip
\noindent H. Sang\hfill\break
\noindent Department of Mathematics\hfill\break
\noindent University of Cincinnati\hfill\break
\noindent Cincinnati, OH 45221 \hfill\break
\noindent sanghn@ucmail.uc.edu
\end{document}